 \renewcommand{\equation}
\newtheorem{thm}{Theorem}[section]
\newtheorem{corresp*}{Correspondence}
\newtheorem{facts*}{Facts}
\newtheorem{claim}{Claim}[section]
\theoremstyle{definition}
\newtheorem{rmk}{Remark}[section]
\newtheorem{defini}{Definition}[section]
\newtheorem{obs}{Observation}[section]
\title{The Goeritz group of a Heegaard splitting of genus two of a Seifert manifold  whose base orbifold is sphere with three exceptional points of sufficiently complex coefficients }
\author{Nozomu Sekino}
\date{}
\begin{document}
\maketitle

\begin{abstract}
In this paper, we add examples to Goeritz groups, the mapping class groups of given Heegaard splittings of 3-manifolds. 
We focus on a Heegaard splitting of genus two of a Seifert manifold  whose base orbifold is sphere with three exceptional points of sufficiently complex coefficients, where ``sufficiently complex'' means that every surgery coefficient $\frac{p_{l}}{q_{l}}$ of each exceptional fiber (in a surgery description) satisfies $q_{l}\not \equiv \pm 1$ mod $p_l$.
\end{abstract}

\section{Introduction}
It is known that every connected orientable closed 3-manifold $X$ admits a {\it Heegaard splitting} $X=V^{+}\cup_{\Sigma}V^{-}$: 
This represents a connected orientable closed surface $\Sigma$ in $X$ and two handlebodies $V^{+}$ and $V^{-}$ bounded by $\Sigma$ in $X$ ``inner'' and ``outer'', respectively. 
The {\it Hempel distance} \cite{hempel} of a Heegaard splitting $X=V^{+}\cup_{\Sigma}V^{-}$ is defined to be the distance between the sets of the boundaries of properly embedded essential disks of $V^{+}$ and $V^{-}$ in (the 1-skelton of) the curve complex of $\Sigma$, and this measures some complexity of the Heegaard splitting. 

For a Heegaard splitting $X=V^{+}\cup_{\Sigma}V^{-}$, the set of isotopy classes of orientation preserving self-homeomorphisms of $X$ fixing $V^{+}$ (so also $V^{-}$ and $\Sigma$), where $\Sigma$ is fixed as a set during isotopies, forms a group by compositions. 
This group is called the {\it Goeritz group} of $X=V^{+}\cup_{\Sigma}V^{-}$. 
In other words, the Goeritz group of $X=V^{+}\cup_{\Sigma}V^{-}$ is a subgroup of the mapping class group of $\Sigma$ consisting of the elements whose representatives can extend to self-homeomorphisms of both of $V^{+}$ and $V^{-}$. 

About Goeritz groups, Johnson \cite{johnson1} showed that if the Hempel distance of a Heegaard splitting is at least $4$, then the Goeritz group of the Heegaard splitting is finite. 
This means that a high Hempel distance Heegaard splitting is very ``rigid''. 
On the other hand, the Goeritz group of a Heegaard splitting has an element of infinite order if the Hempel distance of the Heegaard splitting is at most $1$ (, twist along a reducing sphere or so-called ``eye glass twist'' using weakly reducible pair). 
In this case, determining the structure of the Goeritz group is difficult in general. 
It is unknown whether the Goeritz group of the Heegaard spliting of genus greater than $3$ of the $3$-sphere is finitely generated or not. 
By the sequence of works \cite{goeritz},\cite{scharlemann},\cite{cho1},\cite{cho2},\cite{ck1},\cite{ck2},\cite{ck3},\cite{ck4}, 
a finite presentation of the Goeritz group of every Heegaard splitting of genus two of Hempel distance at most $1$ is known. 
Freedman and Scharlemann \cite{fs} gave a finite generating set of the Goeritz group of the Heegaard splitting of genus three of $3-$sphere. 
For the case where Hempel distance is $2$ or $3$, the Goeritz groups can be finite or infinite. 
For examples of infinite cases, the Goeritz group of a Heegaard splitting induced by an open book decomposition of some type  \cite{johnson2} or by a twisted book decomposition of some type \cite{ik1} is infinite, these examples have Hempel distance $2$. 
For examples of finite cases, the Goeritz group of a Heegaard splitting having some ``keen''-type property, which represents the rigidity of a Heegaard splitting \cite{ik1} is finite, these examples have Hempel distance at least $2$. 
About finitely generativity, the Goeritz group of a Heegaard splitting having ``thick isotopy'' property and finiteness of the mapping class group of the ambient manifold \cite{iguchi} is finitely generated, these examples have Hempel distance at least $2$. 
The example of a infinite Goeritz group of a Heegaard splitting of Hempel distance $3$ is not found for now. 

In this paper, we will add some concrete examples of the Goeritz groups of some Heegaard splittings of 3-manifolds of some types, a Heegaard splitting of genus two of a Seifert manifold  whose base orbifold is sphere with three exceptional points of sufficiently complex coefficients. 
The manifold we consider has a surgery description as in Figure \ref{surg_desc} and $M$ denotes the manifold. 
The word ``with three exceptional points of sufficiently complex coefficients'' means that $q_{l}\not \equiv \pm 1$ mod $p_l$ for all $l=1,2,3$, and we always assume that $p_l>0$. 

\begin{figure}[htbp]
 \begin{center}
  \includegraphics[width=40mm]{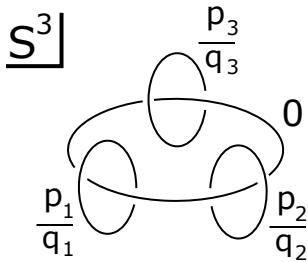}
 \end{center}
 \caption{A surgery description of $M$ with $q_{i}\not \equiv \pm 1$ mod $p_i$ for all $i=1,2,3$}
 \label{surg_desc}
\end{figure}

Every Heegaard splitting of genus two is of the form $H_{\{i,j\}}$ for $i\neq j \in \{1,2,3\}$ (defined in Subsection \ref{hij}). 
We will compute the Goeritz group of $H_{\{i,j\}}$, denoted by $\mathcal{G}(H_{\{i,j\}})$ as follows, where $h$ and $\iota$ are defined in Subsection \ref{invs}:
\begin{thm} \label{main}
\begin{itemize}
\item $\mathcal{G}(H_{\{i,j\}})= \left< h| h^2 \right>\cong \mathbb{Z}/2\mathbb{Z}$ if ``$p_{i} \neq p_{j}$'' or ``$p_{i}=p_{j}$ and $q_{i} \not \equiv q_{j}$ mod $p_i$''.
\item $\mathcal{G}(H_{\{i,j\}})= \left< h,\ \iota| h^2,\ \iota^{2}, \ h\iota h \iota \right>\cong (\mathbb{Z}/2\mathbb{Z})\oplus (\mathbb{Z}/2\mathbb{Z})$ if  $p_{i}=p_{j}$ and $q_{i} \equiv q_{j}$ mod $p_i$.
\end{itemize}
\end{thm}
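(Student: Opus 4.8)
The proof naturally splits into two halves: the lower bound $\mathcal{G}(H_{\{i,j\}})\supseteq\langle h,\iota\rangle$ (respectively $\supseteq\langle h\rangle$), which is a direct construction, and the matching upper bound, which is the substantive part. For the lower bound, $h$ is the restriction to $\Sigma$ of the hyperelliptic involution of the genus-two surface; as this mapping class is central in the mapping class group of $\Sigma$ and extends over every genus-two handlebody, it lies in $\mathcal{G}(H_{\{i,j\}})$ for essentially formal reasons, with $h^2=1$ and $h$ central. When $p_i=p_j$ and $q_i\equiv q_j \bmod p_i$, the diagram of Figure~\ref{surg_desc} has an evident orientation-preserving symmetry exchanging the $i$th and $j$th exceptional fibers; carried onto $M$ and then isotoped to respect the spine of $V^{+}$ used to build $H_{\{i,j\}}$, it yields $\iota\in\mathcal{G}(H_{\{i,j\}})$ with $\iota^{2}=1$. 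Since $\iota$ interchanges the isotopy classes of the two meridian disks $D_i,D_j$ of $V^{+}$, while $h$ (like every mapping class factoring through the hyperelliptic involution) fixes the isotopy class of every simple closed curve on $\Sigma$, we get $\iota\notin\{1,h\}$ and $h\iota\notin\{1,h,\iota\}$, so $\langle h,\iota\rangle\cong(\mathbb{Z}/2)\oplus(\mathbb{Z}/2)$ embeds in $\mathcal{G}(H_{\{i,j\}})$; in the remaining case one only claims $\langle h\rangle\cong\mathbb{Z}/2$.

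For the upper bound I would first record that $M$ is irreducible (the hypotheses on the $p_l,q_l$ rule out reducible and otherwise degenerate Seifert fibered spaces) and that $H_{\{i,j\}}$ is an unstabilized, hence irreducible, Heegaard splitting, so no element of $\mathcal{G}(H_{\{i,j\}})$ is a twist along a reducing sphere. The next and crucial step is to pin down a canonical substructure preserved by the whole Goeritz group. The splitting $H_{\{i,j\}}$ is vertical, and the vertical annulus $A_{ij}$ lying over an arc of the base orbifold joining the cone points of orders $p_i$ and $p_j$ can be isotoped to meet $\Sigma$ in a single essential curve, cutting $M$ along $\Sigma\cup A_{ij}$ into two fibered solid tori $N(t_i),N(t_j)$ and one piece $W$ over the thrice-punctured sphere, each of these being further cut into standard pieces by $\Sigma$. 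Using the classification of essential annuli in small Seifert fibered spaces together with the hypothesis $q_l\not\equiv\pm1 \bmod p_l$ for every $l$, I would show that $A_{ij}$ is the \emph{unique} essential annulus in $M$ admitting such a position relative to $\Sigma$ — equivalently, that the weak reduction of $H_{\{i,j\}}$ is unique up to isotopy. Consequently every $f\in\mathcal{G}(H_{\{i,j\}})$ can be isotoped, keeping $\Sigma$ invariant, so as to preserve $A_{ij}$, its boundary curve, and the resulting decomposition of $M$.

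It then remains to analyze the group of such structure-preserving self-homeomorphisms up to isotopy. On each solid-torus piece $N(t_l)$ the self-homeomorphisms preserving the fibration, the meridian disk, and the trace of $\Sigma$ form, up to isotopy, a group of order $2$ generated by a reflection; here $q_l\not\equiv\pm1 \bmod p_l$ enters again, now to exclude the extra meridian disks of $V^{+}$ or $V^{-}$ which in the $\pm1$ cases enlarge this group and, classically, produce infinite-order Goeritz elements (eyeglass twists). On $W$ the structure-preserving mapping classes likewise form a finite group whose only nontrivial feature is a possible interchange of its two boundary solid tori, and that interchange extends to a homeomorphism of $M$ precisely when $p_i=p_j$ and $q_i\equiv q_j \bmod p_i$. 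Amalgamating these local descriptions along $A_{ij}$ and matching the outcome against the explicit $h$ and $\iota$ yields $\mathcal{G}(H_{\{i,j\}})\subseteq\langle h,\iota\rangle$ in the symmetric case and $\mathcal{G}(H_{\{i,j\}})\subseteq\langle h\rangle$ otherwise, which with the lower bound completes the proof.

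The main obstacle is the rigidity input in the second and third paragraphs: proving that $A_{ij}$ is the unique $\Sigma$-adapted essential annulus and that no meridian disk of $V^{+}$ or $V^{-}$ exhibits the $q_l\equiv\pm1$ behavior — that is, genuinely exploiting the hypothesis $q_l\not\equiv\pm1 \bmod p_l$ — and then verifying that the structure-preserving mapping class groups of the solid-torus pieces are exactly the expected copies of $\mathbb{Z}/2$. Once that rigidity is in hand, the assembly into the stated presentations is bookkeeping.
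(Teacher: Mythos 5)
Your lower bound is essentially the paper's: $h$ is (the restriction of) the hyperelliptic involution and extends over both handlebodies for formal reasons, and $\iota$ is realized by an explicit symmetry when $p_i=p_j$ and $q_i\equiv q_j \bmod p_i$; the relations $h^2=\iota^2=(h\iota)^2=1$ and the nontriviality of $h$, $\iota$, $h\iota$ come out as you say. The problem is the upper bound, where your argument is an announcement rather than a proof. Everything rests on two assertions: (i) that the vertical annulus $A_{ij}$ is the unique essential annulus ``adapted to $\Sigma$'', and (ii) that every Goeritz element can be isotoped \emph{keeping $\Sigma$ invariant} so as to preserve $A_{ij}$. Neither is established, and (ii) in particular hides the entire difficulty: uniqueness of $A_{ij}$ up to isotopy in $M$ does not give uniqueness up to isotopies preserving $\Sigma$, which is what a Goeritz-group argument needs. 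The paper's substitute for your (i)--(ii) is Claim \ref{claim1}: the set $\mathcal{C}$ of disks $D\subset V$ admitting a dual cut system $\{\tilde{D'},\tilde{E'}\}$ in $W$ with $s_{\{\partial \tilde{D}',\partial \tilde{E}'\}}(\partial D)=x^{p_i}_{\partial \tilde{D}'}x^{p_j}_{\partial \tilde{E}'}$ consists of $E$ alone, with the dual pair forced to be $(D',E')$ or $(E',D')$. Proving this occupies most of Section \ref{computation} and is a delicate case analysis with planar diagrams, outermost arcs and the counting observations of Section \ref{t_o}; it is exactly where the hypothesis $q_l\not\equiv\pm1 \bmod p_l$ (via $|q'_l|\geq 2$, $p_l\geq 5$, $p_l-|q'_l|>|q'_l|$) is actually consumed. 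You correctly flag this as ``the main obstacle'', but flagging it is not the same as overcoming it.

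Two further points in your third paragraph also need real arguments. First, you must rule out infinite-order elements supported near $A_{ij}$: twists along the annuli $A_{ij}\cap V$ and $A_{ij}\cap W$ preserve all of your structure, so finiteness of the ``structure-preserving'' mapping class groups of the pieces is not automatic and is precisely the issue in a distance-$2$ splitting. The paper handles this with Claim \ref{claim3}, where an element fixing $\partial D'$, $\partial E'$ and $\partial E$ with orientations is shown to fix every complementary domain and hence to be isotopic to the identity by Alexander's trick. Second, your criterion for when the interchange of the two exceptional solid tori extends to the Goeritz group needs the ``only if'' direction, which the paper obtains in Claim \ref{claim2} by tracking the action on the intersection points of $\partial E$ with $\partial D'\cup\partial E'$ to force $q'_i=q'_j$; the observation that the surgery diagram has a symmetry only gives the ``if'' direction. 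As it stands your proposal outlines a plausible alternative geometric strategy, but the analogues of all three of the paper's claims --- which constitute the whole proof --- are missing from it.
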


The rest of this paper is organized as follows: 
In Section \ref{gg_hs}, we give the definition of Goeritz groups, determine Heegaard splittings of genus two of $M$ and their diagrams, and give two elements of the Goeritz groups. 
In Section \ref{t_o}, we give terminologies and observations about simple closed curves on oriented surface of genus two, used for the proof. 
In Section \ref{computation}, we give the proof. \\

Throughout the paper, curves in surfaces are assumed to intersect minimally and essentially otherwise stated. 
We often identify curves in surfaces and their isotopy classes. 
For a submanifold $B$ in manifold $A$, $N(B;A)$ denotes a sufficiently small regular neighborhood of $B$ in $A$. 
Sometimes, $A$ is abbreviated if $A$ is clear from the context. 
For a finite set $N$, $|N|$ denotes the number of elements of $N$. 
For two elements $a,b$, the ordered pair is denoted by $(a,b)$ and the unordered pair is denoted by $\{a,b\}$. 

\section*{Acknowledgements}
The author would like to thank professor Takuya Sakasai for supporting his research, and he also would like to thank professor Sangbum Cho and professor Yuya Koda for introducing him the area of Goeritz groups.

\section{Goeritz groups and genus two Heegaard splittings of $M$}\label{gg_hs}
\subsection{Goeritz groups}
We give a definition of the Goeritz group of a given Heegaard splitting.\\ 
Let $X$ be a connected, closed and orientable 3-manifold, and $X=V^{+}\cup_{\Sigma}V^{-}$ a Heegaard splitting of $X$. 
\begin{defini}(The Goeritz group of $X=V^{+}\cup_{\Sigma}V^{-}$)\\
The Goeritz group of $X=V^{+}\cup_{\Sigma}V^{-}$ is a group whose elements are the isotopy classes relative to $\Sigma$ of self homeomorphisms $f$ of $X$ satisfying $f(V^{+})=V^{+}$ and whose products are given by compositions. 
\end{defini}
In another way, the Goeritz group is a subgroup of the mapping class group of the splitting surface $\Sigma$ consisting of the elements which extend to self homeomorphisms of both of $V^{+}$ and $V^{-}$. 

\subsection{Genus two Heegaard splitting of $M$} \label{hs}
$M$ has a surgery description as in Figure \ref{surg_desc}. 
It is known that $M$ is an irreducible, non-exceptional Seifert manifold (see \cite{jaco} for example). 
It is also known that every Heegaard splitting of such manifolds is irreducible, and every irreducible Heegaard splitting of Seifert manifolds is either vertical or horizontal \cite{ms}. 
It can be computed that $M$ has no genus two horizontal Heegaard splittings under $q_{i}\not \equiv  \pm1$ mod $p_{l}$ for all $l=1,2,3$. 
A Seifert manifold which has genus two horizontal Heegaard splitting must have a genus one fibered knot whose monodromy is periodic (as boundary free map). 
And such Seifert manifold has an exceptional fiber of coefficient $\frac{p}{q}$ for $p=2,3,6$ since the order of every periodic orientation preserving self-homeomorphism of torus with one boundary component is $1,2,3,$ or $6$. 
There is no $q$ satisfying $q\not \equiv  \pm1$ mod $p$ for such $p$. 
Thus all Heegaard splittings of $M$ are vertical. We will see them in the next subsection. 

\subsection{Vertical Heegaard splittings of $M$} \label{hij}
A vertical Heegaard splitting of $M$ is obtained as follows: 
Choose $i\neq j\in\{1,2,3\}$. 
Take a disk $D$ which is a Seifert surface of the $0$-framed unknot in Figure \ref{surg_desc} such that it intersects every $\frac{p_l}{q_l}$-framed knot, $K_l$ once for $l=1,2,3$. 
Connect $K_i$ and $K_j$ by a simple arc $\alpha$ on $D$. 
Set $W=N(K_{i}\cup \alpha \cup K_{j})$, which is a handlebody of genus two. 
In fact, $V=M\setminus W$ is also a handlebody of genys two. 
A Heegaard splitting $M=V\cup_{s}W$ is called a vertical Heegaard splitting of $M$, where $S$ denotes the splitting surface. 
A schematic of this splitting is depicted in Figure \ref{vert_hs}. 
This splitting is independent of the choice of $\alpha$ up to homeomorphisms. 
Thus there are at most three Heegaard splitting of $M$ up to homeomorphisms, choices of $i\neq j\in \{1,2,3\}$. 
This Heegaard splitting is denoted by $H_{\{i,j\}}$.

\begin{figure}[htbp]
 \begin{center}
  \includegraphics[width=60mm]{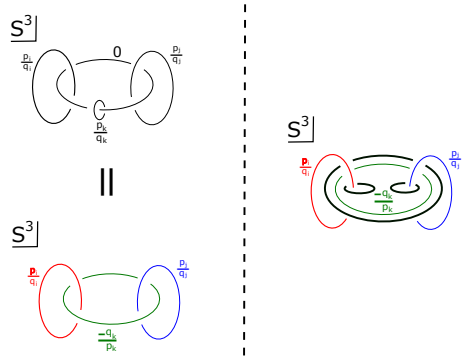}
 \end{center}
 \caption{A genus two vertical Heegaard splitting of $M$}
 \label{vert_hs}
\end{figure}

\subsection{Standard diagrams of $H_{\{i,j\}}$}
There are standard diagrams for $H_{\{i,j\}}$ as in Figure \ref{std_diag}. 
In this figure, boxes with rational numbers (need not to be reduced) imply the curves in the boxes: For a box with number $\frac{a}{b}$, draw horizontal $|a|$-lines and vertical $|b|$-lines. Then resolve the intersection points so that curves (we follow from the bottom of the box) twist toward left if $\frac{a}{b}>0$, and curves (we follow from the bottom of the box) twist toward right if $\frac{a}{b}<0$. 
Examples are depicted in Figure \ref{sign}. We allow reducible rational numbers in boxes. 
By pasting 2-handles along green and purple curves inner the surface, we get a handlebody $V$, and by pasting 2-handles along red and blue curves outer the surface, we get a handlebody $W$. We call these disks $D',E',D_{L},D_{R},E$ and give them orientations as in Figure \ref{std_diag}. 
Note that $D_{R}$ is obtained from $E$ and $D_{L}$ by band-sums, see Figure \ref{band_sum}. 
Note that these diagrams may be non minimally intersecting. 
By this diagrams, we can see that the Hempel distance of $H_{\{i,j\}}$ is less than $3$ since there is a curve on $S$ which is disjoint from both of $\partial D'$ and $\partial E$. 
Furthermore, that is grater than $1$ since $M$ is irreducible and the genus of $S$ is two. 
Thus the Hempel distance of $H_{\{i,j\}}$ is $2$.

\begin{figure}[htbp]
 \begin{center}
  \includegraphics[width=140mm]{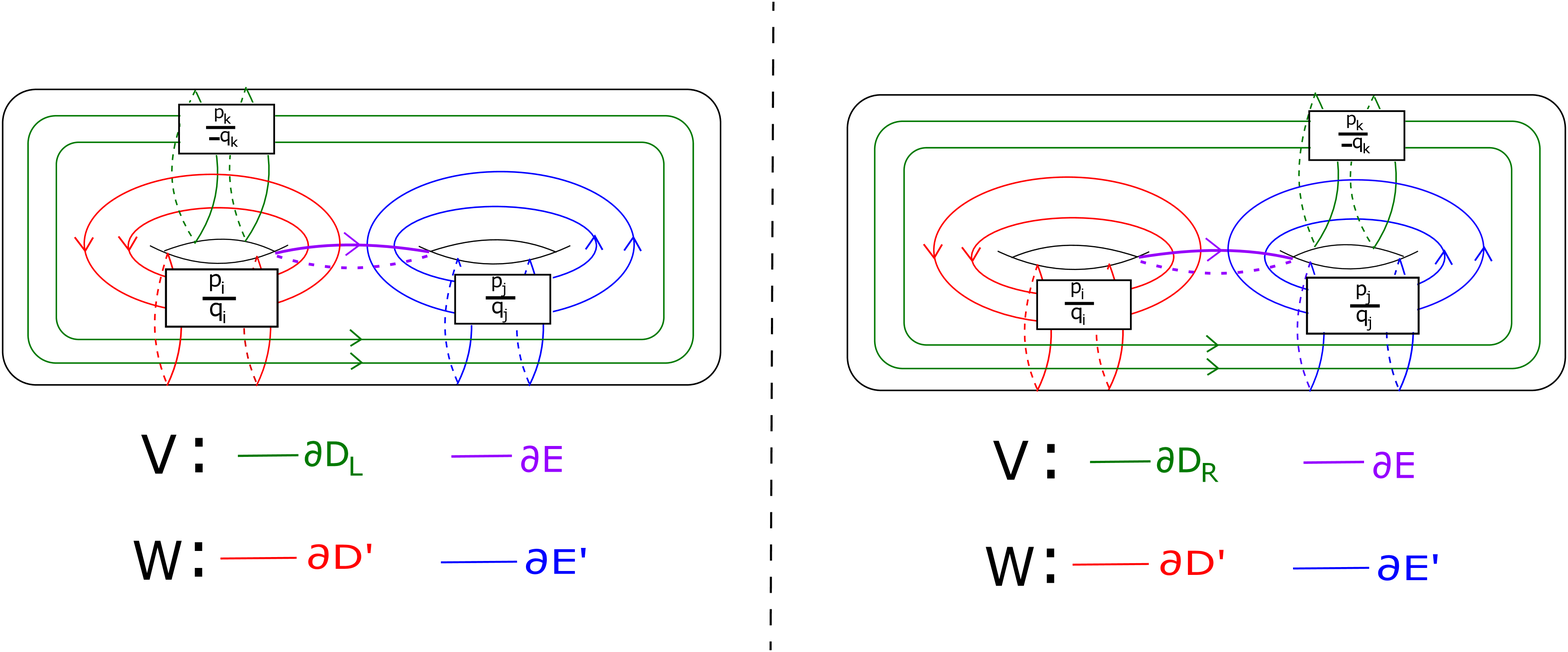}
 \end{center}
 \caption{(left) Left standard diagram\ \ \ \ (right) Right standard diagram}
 \label{std_diag}
\end{figure}

\begin{figure}[htbp]
 \begin{center}
  \includegraphics[width=60mm]{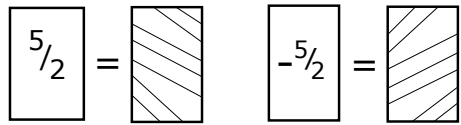}
 \end{center}
 \caption{Notations of boxes with numbers}
 \label{sign}
\end{figure}

\subsection{Useful standard diagrams of $H_{\{i,j\}}$}
Moreover, we replace standard diagrams with more useful ones: 
There is unique numbers $q'_{i}$ and $q'_{j}$ such that $q'_{i}\equiv q_{i}$ mod $p_i$, $2|q'_{i}|<p_i$, and $q'_{j}\equiv q_{j}$ mod $p_j$, $2|q'_{j}|<p_j$ hold. 
By applying Dehn twists and band sums as in Figures \ref{dehn_surg}, \ref{band_sum}, we have useful standard diagrams, which are obtained from the standard diagrams in Figure \ref{std_diag} by replacing  $q_i$,$q_j$,$q_k$ with $q'_i$,$q'_j$,$q'_k$. Note that $q'_{k}\equiv q_{k}$ mod $p_k$ holds and that $2|q'_{k}|<p_k$ does not necessarily hold. 
We continue to call the disks in useful standard diagrams $D',E',D_{L},D_{R},E$. 
Note that under $p_{i}=p_{j}$, $q'_{i}=q'_{j}$ if and only if $q_{i}\equiv q_{j}$ mod $p_{i}$. 
These diagrams may be non minimally intersecting. 
In subsection \ref{minimallyintersect}, we list diagrams after making minimally intersecting.

\begin{figure}[htbp]
 \begin{center}
  \includegraphics[width=100mm]{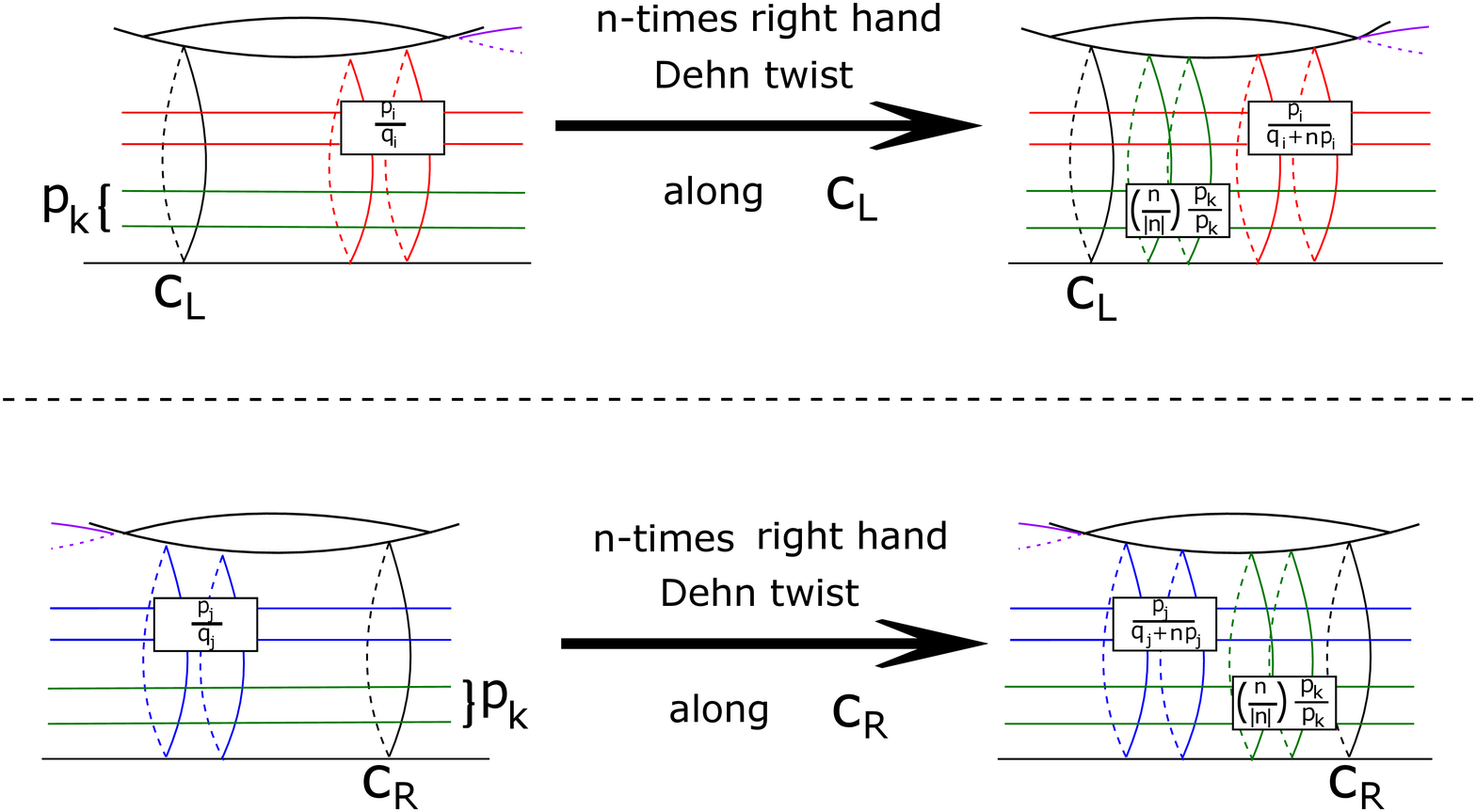}
 \end{center}
 \caption{Dehn twists}
 \label{dehn_surg}
\end{figure}

\begin{figure}[htbp]
 \begin{center}
  \includegraphics[width=100mm]{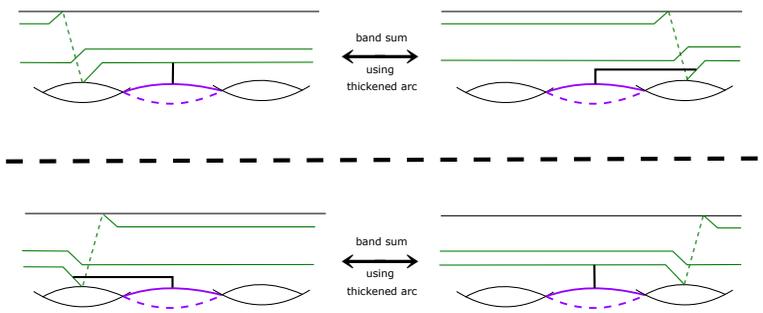}
 \end{center}
 \caption{Band sums}
 \label{band_sum}
\end{figure}

\subsection{Two elements of the Goeritz group of $H_{\{i,j\}}$} \label{invs}
There are two elements of the Goeritz group of $H_{\{i,j\}}$. We will define them. 
We work on useful standard diagrams. 

\subsubsection{An involution $h$} 
An involution $h$ is defined as in Figure \ref{inv_h}, a $\pi$-rotation along the horizontal axis $l_{h}$ (as a self-homeomorphism of $S$). 
This $h$ maps $D'$,$E'$,$D_{L}$,$E$ to themselves, reversing the orientations. 
Hence $h$ extends to self-homeomorphisms of both of $V$ and $W$. 
Thus $h$ is indeed an element of the Goeritz group. 
\begin{figure}[htbp]
 \begin{center}
  \includegraphics[width=100mm]{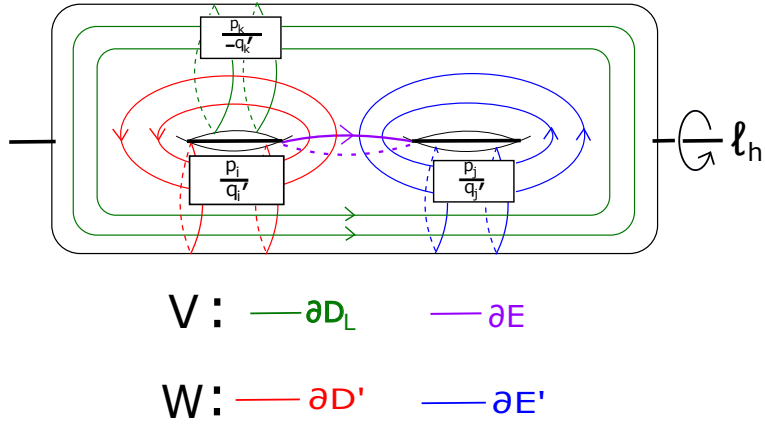}
 \end{center}
 \caption{An involution $h$}
 \label{inv_h}
\end{figure}

\subsubsection{An involution $\iota$} 
We assume that $p_i=p_j$ and $q_{i}\equiv q_{j}$ mod $p_{i}$. 
This implies that $q'_{i}=q'_{j}$. 
An involution $\iota$ is defined as in Figure \ref{inv_iota}, a $\pi$-rotation along the vertical axis $l_{\iota}$ (as a self-homeomorphism of $S$). 
This $\iota$ exchanges $D'$ and $E'$, fixes $E$, and maps $D_{L}$ to $D_{R}$. 
Hence $\iota$ extends to self-homeomorphisms of both of $V$ and $W$. 
Thus $\iota$ is indeed an element of the Goeritz group. 
\begin{figure}[htbp]
 \begin{center}
  \includegraphics[width=80mm]{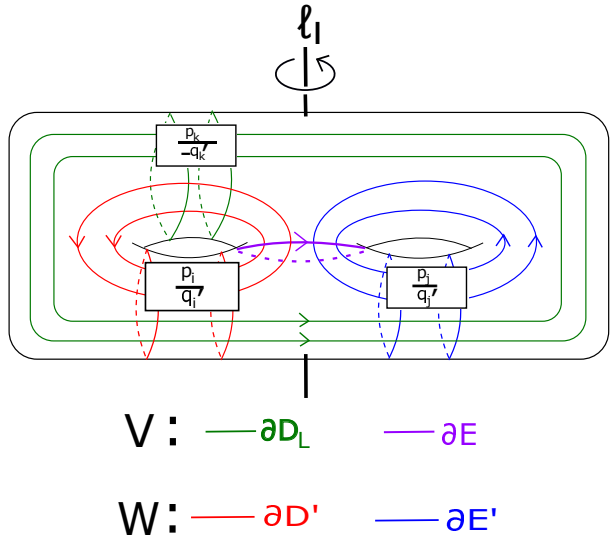}
 \end{center}
 \caption{An involution $\iota$}
 \label{inv_iota}
\end{figure}

\subsection{Minimally intersecting standard diagrams of $H_{\{i,j\}}$}\label{minimallyintersect}
We list minimally intersecting useful standard diagram of $H_{\{i,j\}}$. 
When $q'_{i}q'_{k}>0$, the diagram obtained from the left of Figure \ref{std_diag} by replacing $q_l$ with $q'_{l}$ for all $l=1,2,3$ is minimally intersecting. 
And when $q'_{j}q'_{k}>0$, the diagram obtained from the right of Figure \ref{std_diag} by replacing $q_l$ with $q'_{l}$ for all $l=1,2,3$ is minimally intersecting. 
The other cases are as in Figures \ref{left_qi+_pk},\ref{left_qi+_qk},\ref{left_qi-_pk},\ref{left_qi-_qk},\ref{right_qj+_pk},\ref{right_qj+_qk},\ref{right_qj-_pk},\ref{right_qj-_qk}. 

In terms of the planar diagrams (defined in Definition \ref{planar}), we see the following for any cases: 
\begin{itemize}
\item The diagram of $\partial D' \cup \partial E'$ on $\mathcal{P}_{\{ \partial D_{L}, \partial E\}}$ has $p_{j}-|q'_j|$ consecutive parallel oriented arcs coming from $\partial E'$ connecting $\partial E^{+}$ and $\partial E^{-}$,
 $p_{i}-|q'_{i}|$ consecutive parallel oriented arcs coming from $\partial D'$ connecting $\partial D^{+}_{L}$ and $\partial D^{-}_{L}$,
 and $(p_{k}-1)$ blocks of $|q'_{j}|$ consecutive parallel oriented arcs coming from $\partial E'$ connecting $\partial D^{+}_{L}$ and $\partial D^{-}_{L}$. 

\item The diagram of $\partial D_{L} \cup \partial E$ on $\mathcal{P}_{\{ \partial D', \partial E'\}}$ has $2$ consecutive parallel oriented arcs coming from $\partial E$ connecting $\partial E'^{+}$ and $\partial E'^{-}$ (since $p_{j}>2|q'_j|$),
 $|p_k|$ consecutive parallel oriented arcs coming from $\partial D_{L}$ connecting $\partial E'^{+}$ and $\partial E'^{-}$.

\item The diagram of $\partial D' \cup \partial E'$ on $\mathcal{P}_{\{ \partial D_{R}, \partial E\}}$ has $p_{i}-|q'_i|$ consecutive parallel oriented arcs coming from $\partial D'$ connecting $\partial E^{+}$ and $\partial E^{-}$,
 $p_{j}-|q'_{j}|$ consecutive parallel oriented arcs coming from $\partial E'$ connecting $\partial D^{+}_{R}$ and $\partial D^{-}_{R}$,
 and $(p_{k}-1)$ blocks of $|q'_{i}|$ consecutive parallel oriented arcs coming from $\partial D'$ connecting $\partial D^{+}_{R}$ and $\partial D^{-}_{R}$. 

\item The diagram of $\partial D_{R} \cup \partial E$ on $\mathcal{P}_{\{ \partial D', \partial E'\}}$ has $2$ consecutive parallel oriented arcs coming from $\partial E$ connecting $\partial D'^{+}$ and $\partial D'^{-}$ (since $p_{i}>2|q'_i|$),
 $|p_k|$ consecutive parallel oriented arcs coming from $\partial D_{R}$ connecting $\partial D'^{+}$ and $\partial D'^{-}$.
\end{itemize}
Note that $|q'_l|\geq 2$ and $p_l \geq 5$ for all $l=i,j,k$ and $p_{l}-|q'_{l}|>|q'_{l}|$ for $l=i,j$.  

\begin{figure}[htbp]
 \begin{center}
  \includegraphics[width=90mm]{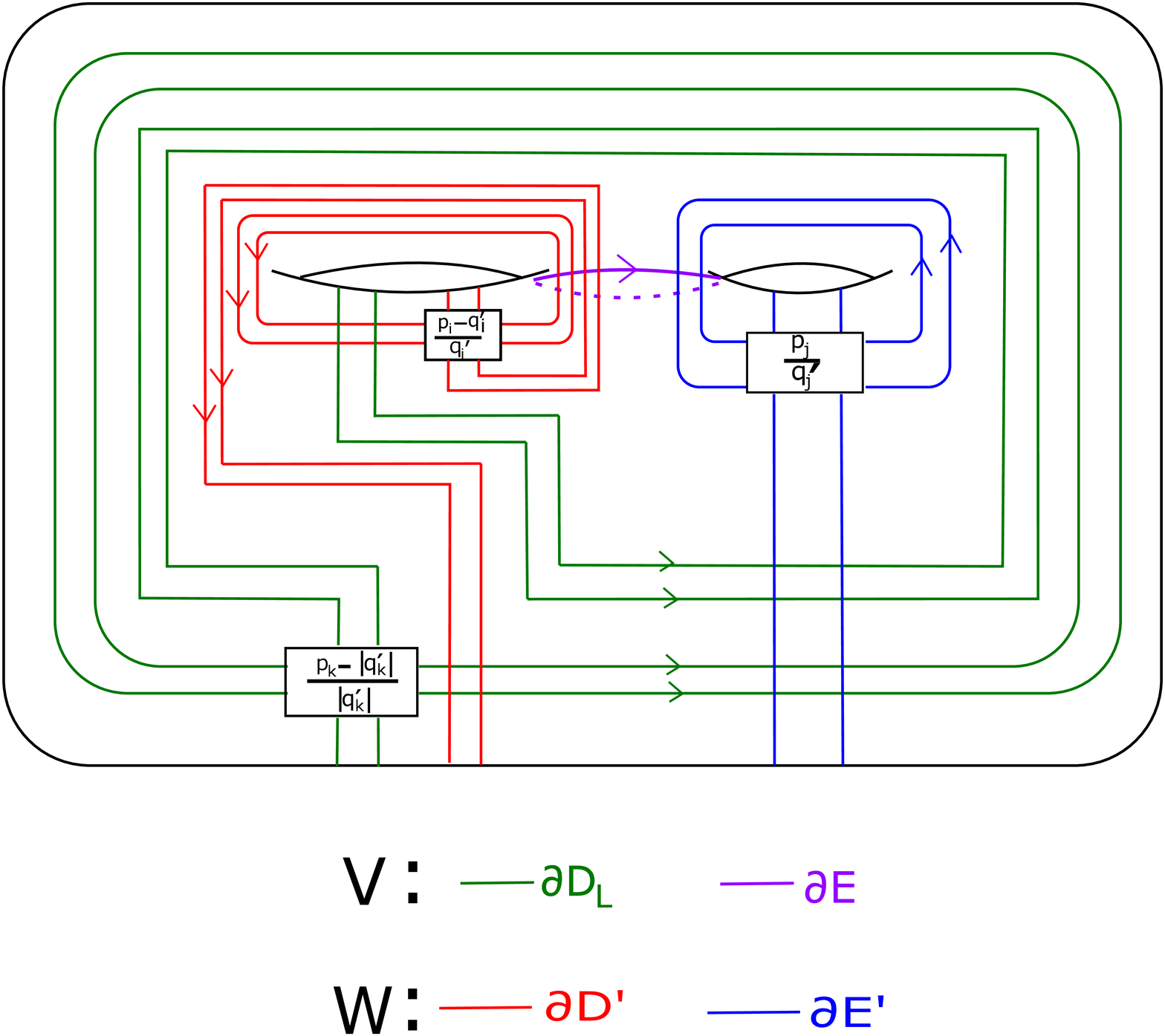}
 \end{center}
 \caption{Useful left standard diagram where $q'_{i}>0$, $0<-q'_{k}<p_k$}
 \label{left_qi+_pk}
\end{figure}

\begin{figure}[htbp]
 \begin{center}
  \includegraphics[width=90mm]{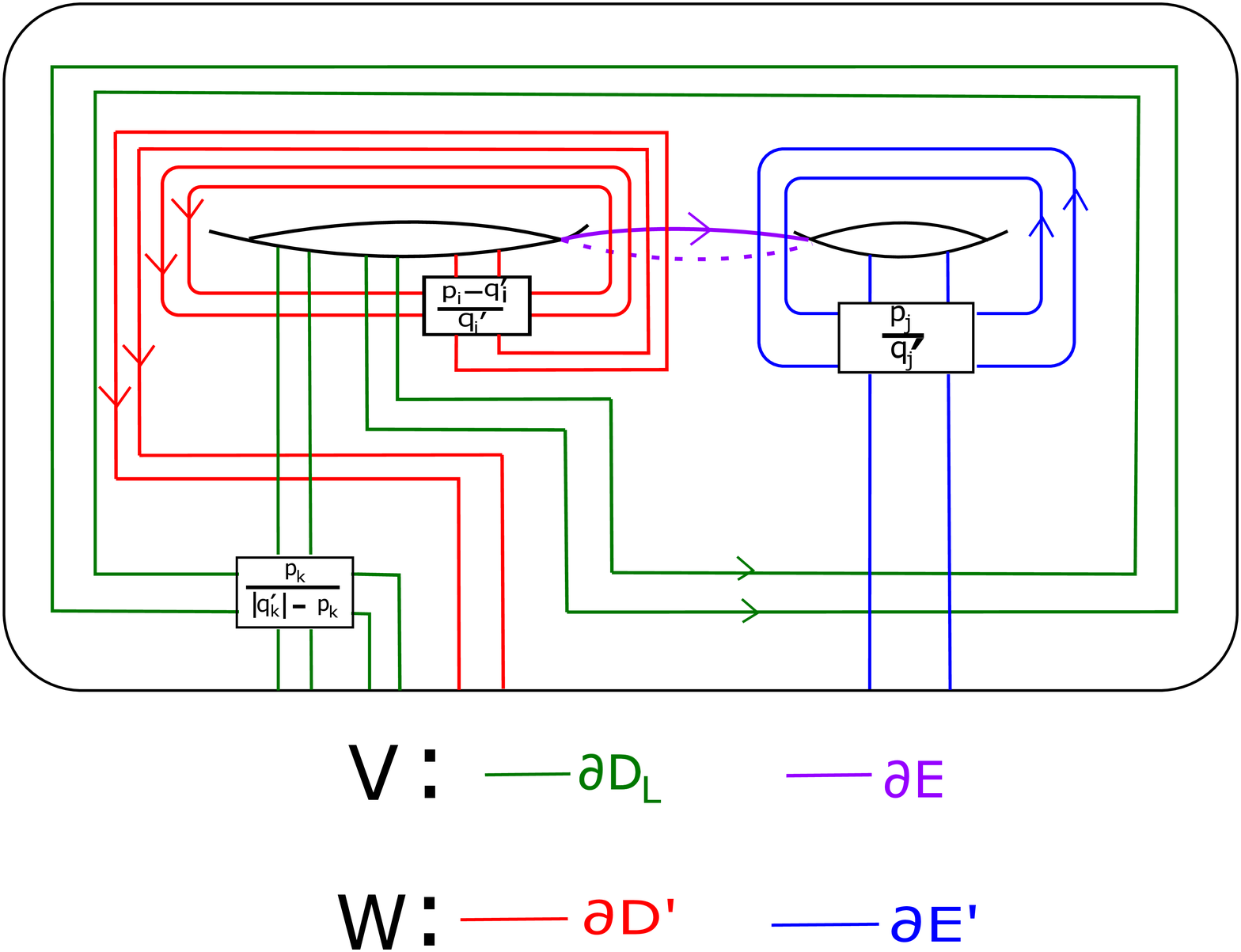}
 \end{center}
 \caption{Useful left standard diagram where $q'_{i}>0$, $0<p_{k}<-q'_{k}$}
 \label{left_qi+_qk}
\end{figure}

\begin{figure}[htbp]
 \begin{center}
  \includegraphics[width=90mm]{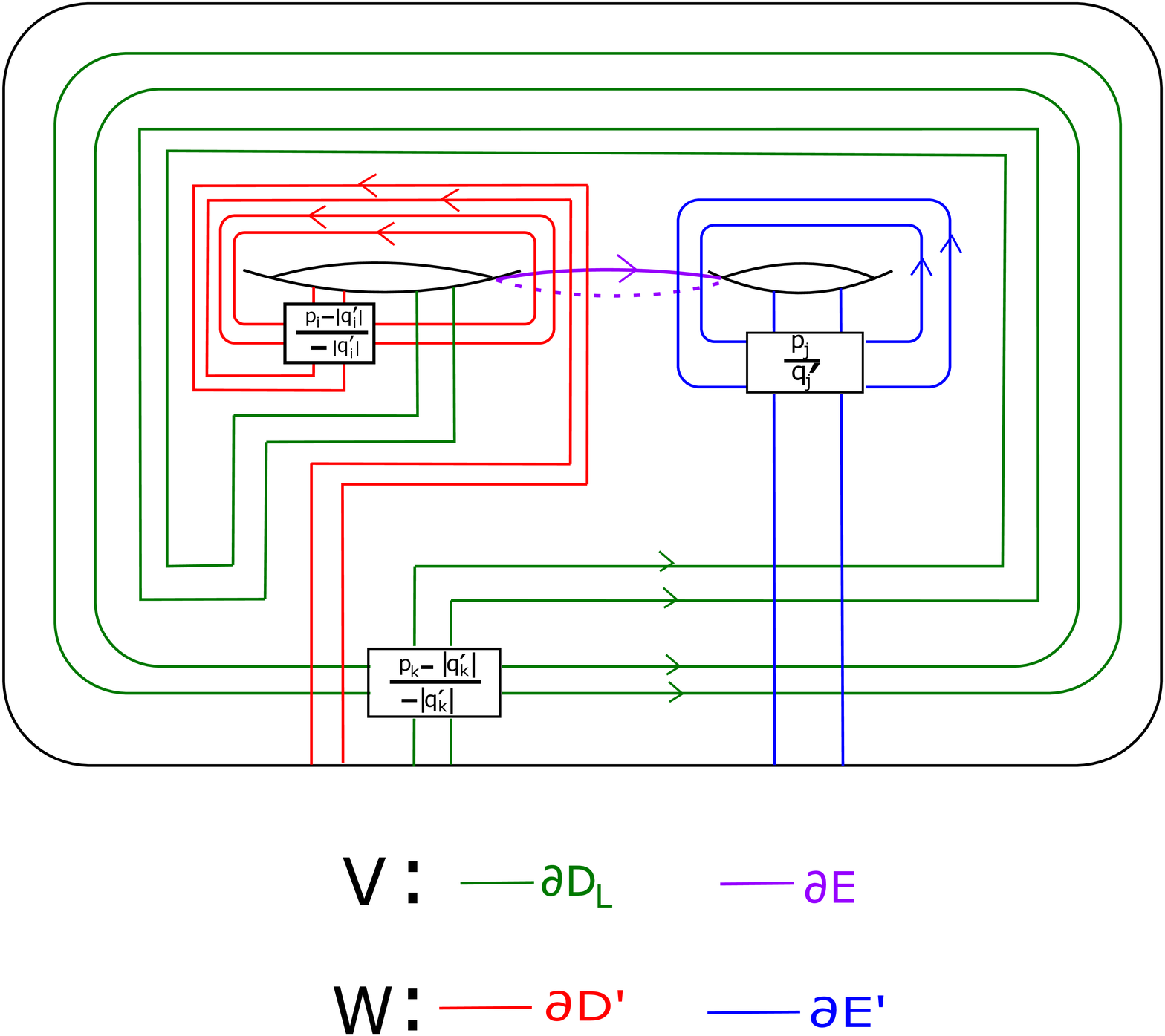}
 \end{center}
 \caption{Useful left standard diagram where $q'_{i}<0$, $0<q'_{k}<p_k$}
 \label{left_qi-_pk}
\end{figure}

\begin{figure}[htbp]
 \begin{center}
  \includegraphics[width=90mm]{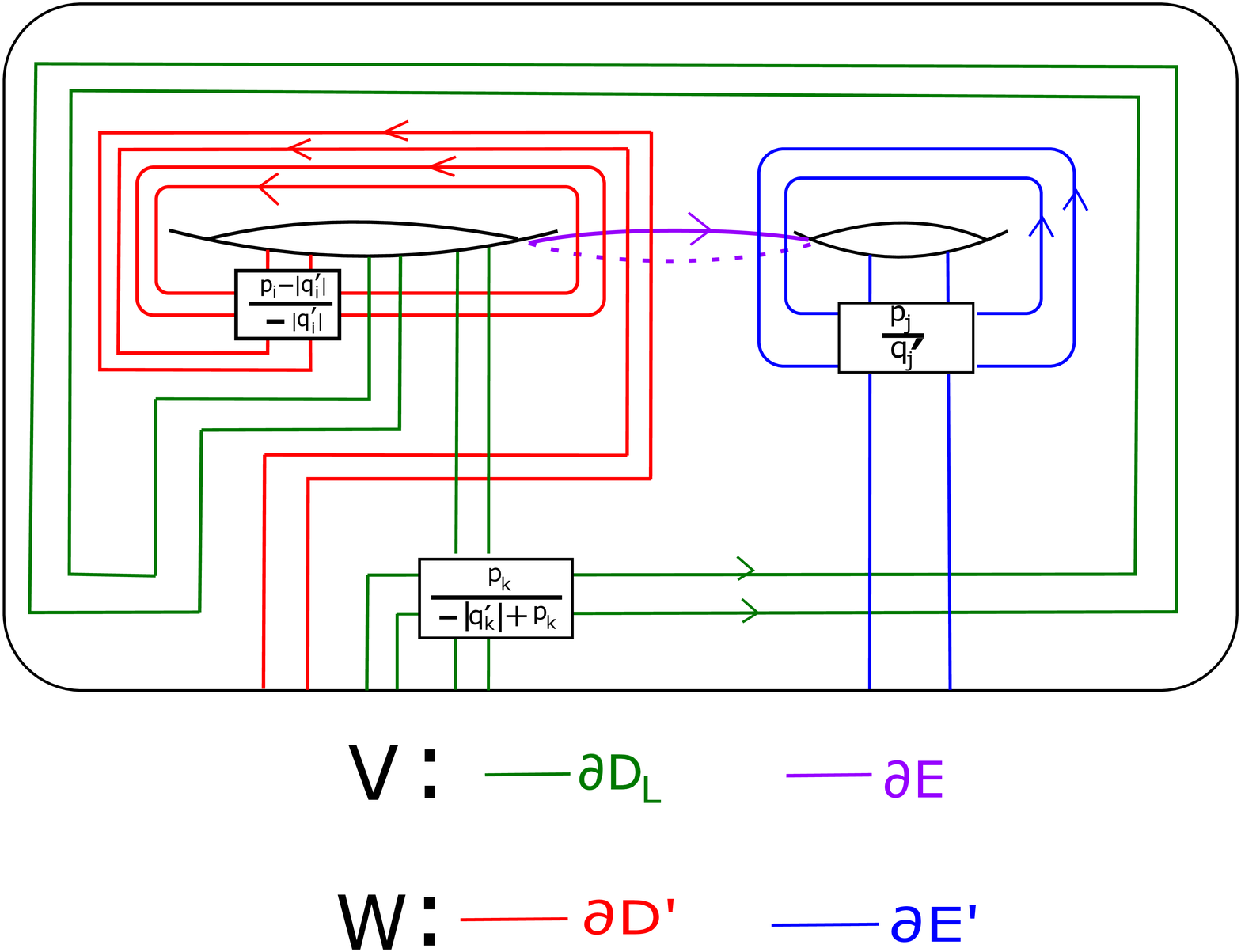}
 \end{center}
 \caption{Useful left standard diagram where $q'_{i}<0$, $0<p_{k}<q'_{k}$}
 \label{left_qi-_qk}
\end{figure}

\begin{figure}[htbp]
 \begin{center}
  \includegraphics[width=90mm]{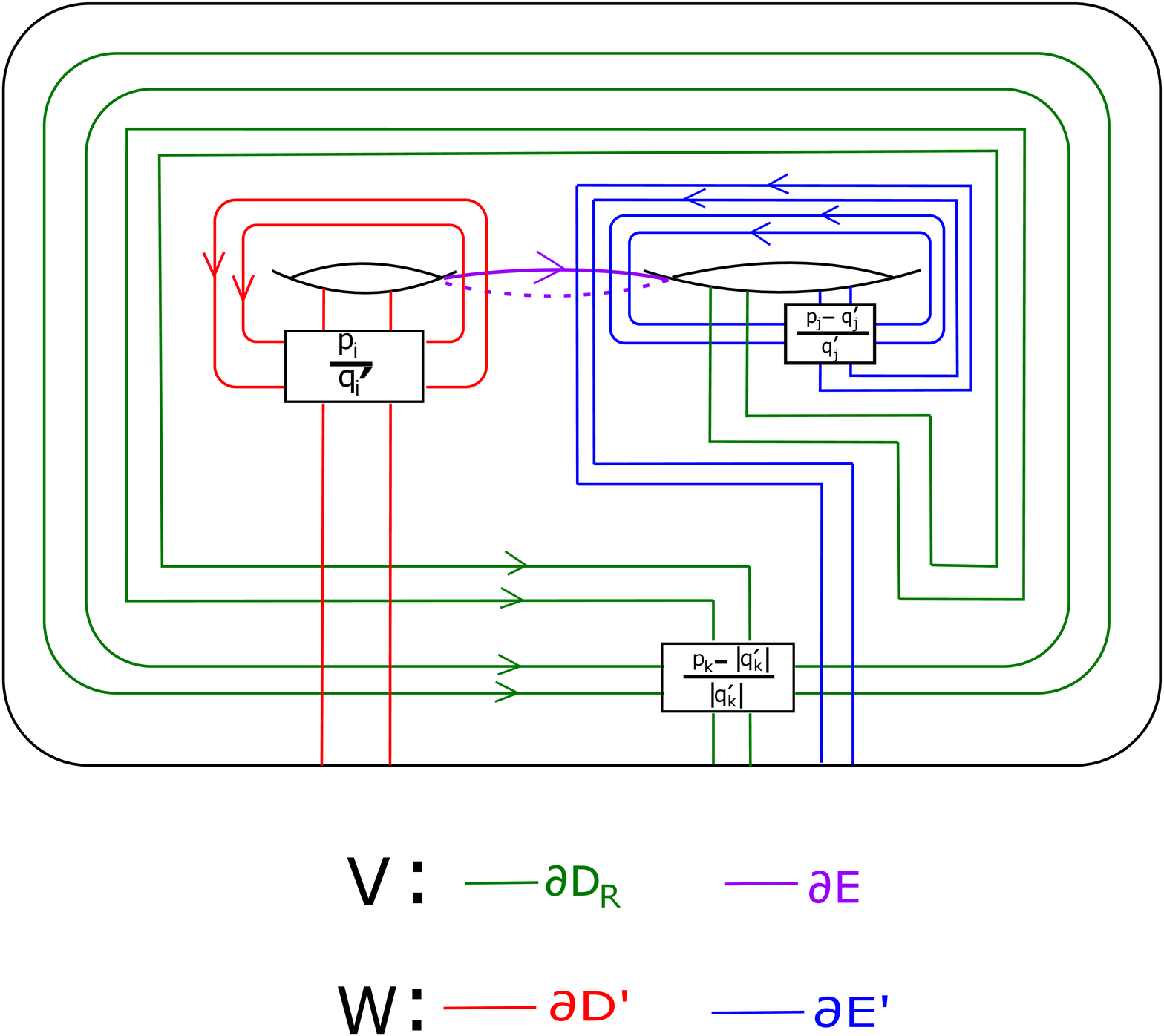}
 \end{center}
 \caption{Useful right standard diagram where $q'_{j}>0$, $0<-q'_{k}<p_k$}
 \label{right_qj+_pk}
\end{figure}

\begin{figure}[htbp]
 \begin{center}
  \includegraphics[width=90mm]{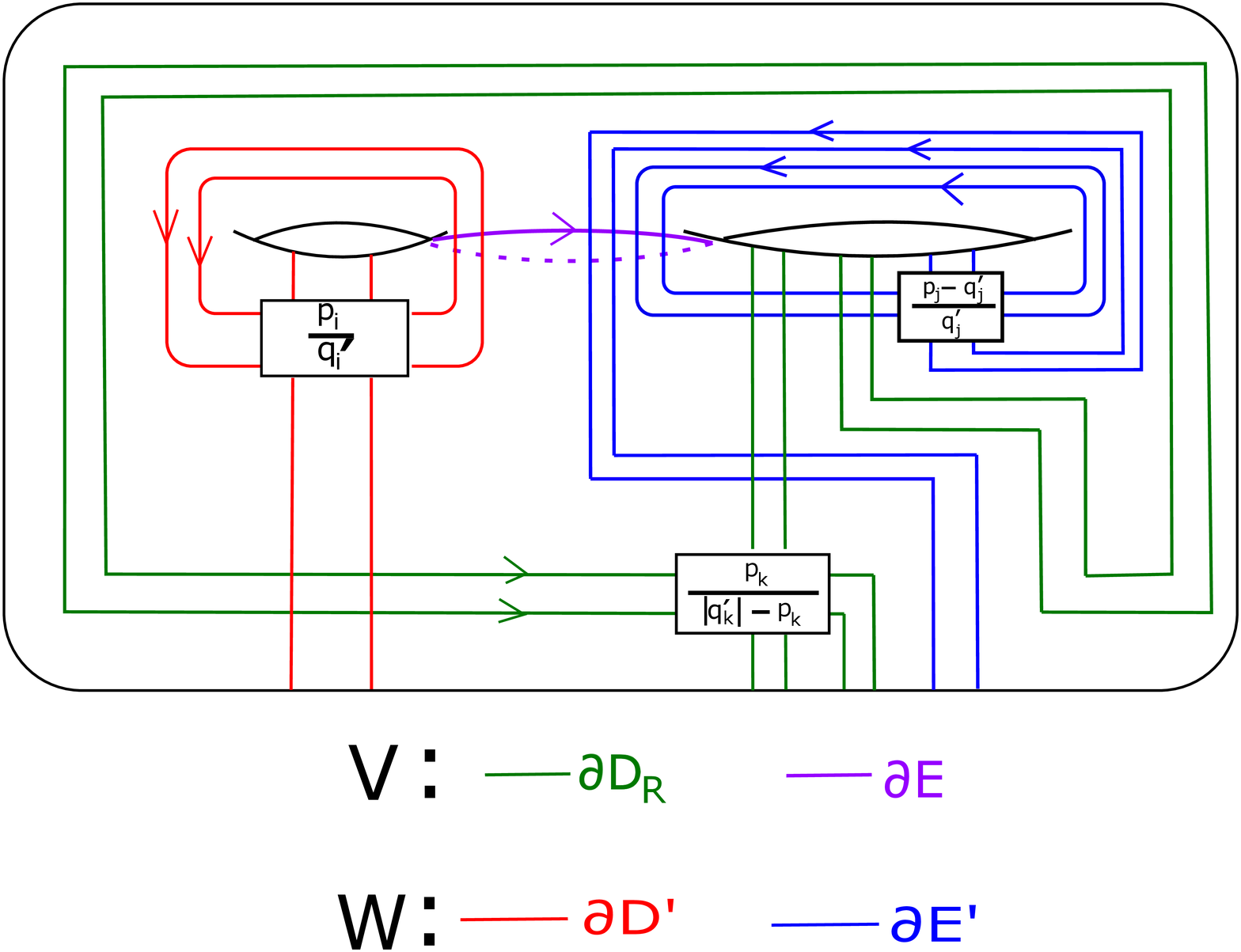}
 \end{center}
 \caption{Useful right standard diagram where $q'_{j}>0$, $0<p_{k}<-q'_{k}$}
 \label{right_qj+_qk}
\end{figure}

\begin{figure}[htbp]
 \begin{center}
  \includegraphics[width=90mm]{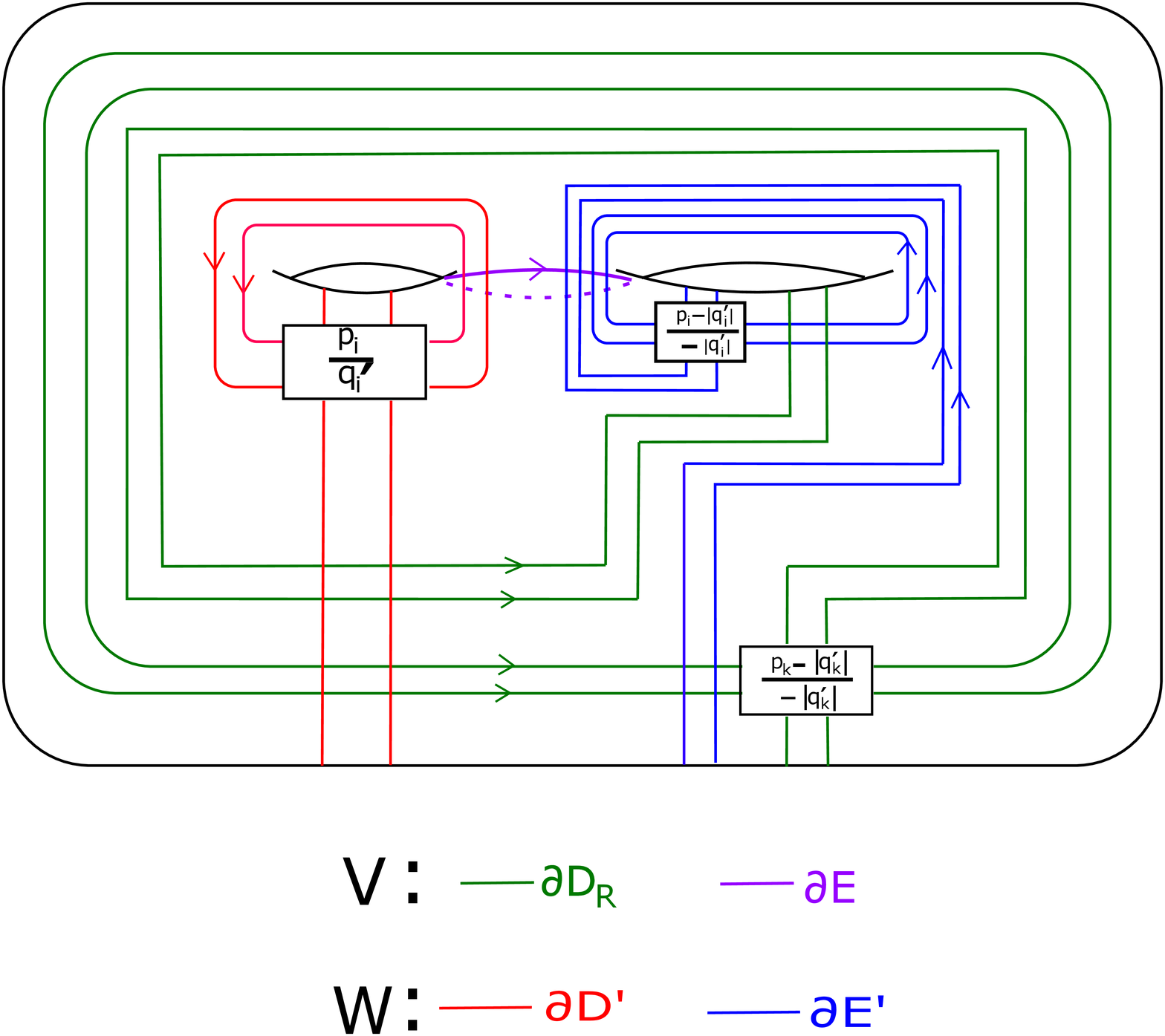}
 \end{center}
 \caption{Useful right standard diagram where $q'_{j}<0$, $0<q'_{k}<p_k$}
 \label{right_qj-_pk}
\end{figure}

\begin{figure}[htbp]
 \begin{center}
  \includegraphics[width=90mm]{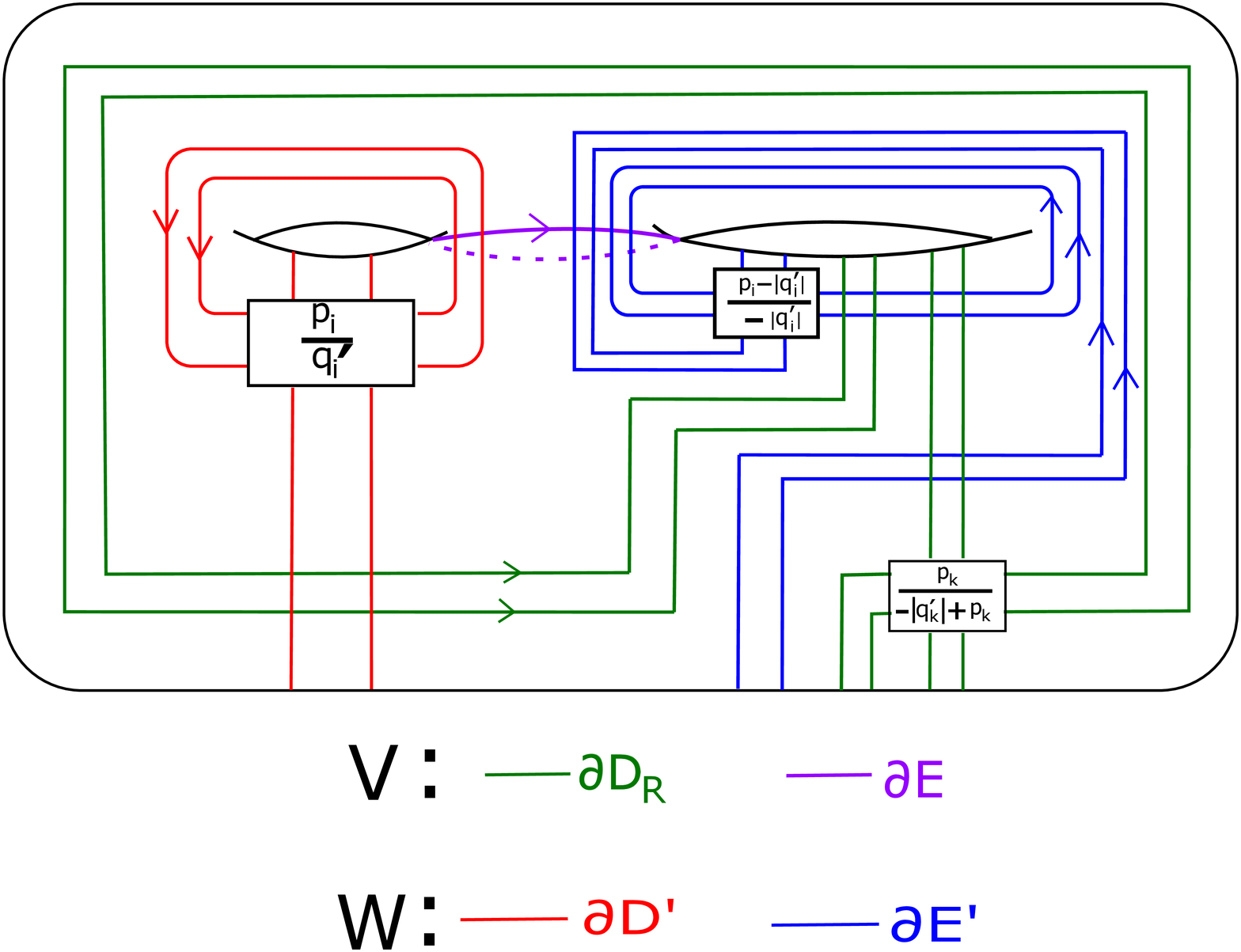}
 \end{center}
 \caption{Useful right standard diagram where $q'_{j}<0$, $0<p_{k}<q'_{k}$}
 \label{right_qj-_qk}
\end{figure}

\section{Terminologies and observations}\label{t_o}
In this section, we review terminologies and observations about simple closed curves on a connected closed oriented surface of genus two. 
Almost of all techniques in this section is in \cite{ck1},\cite{ck2},\cite{ck3},\cite{ck4}. 
Let $S$ be a connected closed oriented surface of genus two in this section.

\begin{defini}(Cut systems)\\
A pair of (oriented) disjoint simple closed curves $\{l_1,l_2\}$ in $S$ is a {\it cut system} of $S$ if $S\setminus(l_1\cup l_2)$ is a connected planar surface. 
Note that curves of a cut system are necessarily non-separating in $S$, and disjoint non-parallel non-separating two simple closed curves in $S$ always form a cut system of $S$. 
\end{defini}

\begin{defini}(Planar diagrams) \label{planar} \\
Let $\{l_1,l_2\}$ be a cut system of $S$. 
When we consider simple closed curves on $S$, it is sometimes convenient to regard them as curves on $S\setminus (l_1\cup l_2)$, denoted by $\mathcal{P}_{\{l_1,l_2\}}$, and we call the obtained diagram {\it planar diagram}. 
The orientation of the interior of $S$ induces that of $\mathcal{P}_{\{l_1,l_2\}}$. 
There are four boundary components in $\mathcal{P}_{\{l_1,l_2\}}$. 
For $i=1,2$, let $l^{+}_{i}$ be the boundary component of $\mathcal{P}_{\{l_1,l_2\}}$ which comes from $l_{i}$ and whose orientation as the boundary of $\mathcal{P}_{\{l_1,l_2\}}$ is same as the orientation induced by $l_{i}$,  and let $l^{-}_{i}$ be the other boundary component coming from $l_{i}$. 
%When $\{D,E\}$ has an orientation, we suppose that the ``right'' side of $\partial D$ (resp. $\partial E$) corresponds to $D^{+}$ (resp. $E^{+}$).
\end{defini}

\begin{defini}(Band sums)\\
Let $\{l_1,l_2\}$ be a cut system of $S$. 
Take an oriented arc $\alpha$ on $S$ starting from $l_1$, ending on $l_2$ and disjoint from $l_1\cup l_2$ in its interior. 
Then the boundary of $N(l_1\cup \alpha \cup l_2)$ which is neither $l_1$ nor $l_2$, denoted by $l'_{1}$ is a non-separating simple closed curve in $S$ which is parallel to neither $l_1$ nor $l_2$. 
We give $l'_1$ the orientation coming from $l_2$, which has parallel orientation near $l_2$.
We call this $l'_{1}$ the curve obtained by a {\it band-sum} from $\{l_1,l_2\}$ (using a {\it band} $\alpha$). 
Note that $\{l'_1,l_2\}$ is another cut system of $S$, and we call this a {\it cut system obtained by a band-sum} from $\{l_1,l_2\}$ (using a band $\alpha$). 
%Since every non separating disk of $V$ which is disjoint from $D\cup E$ is obtained by some band sum from $\{D,E\}$ and the non separating disk complex of a handlebody is connected [???], two cut systems are related by a sequence of band-sums.
Moreover, when $S$ is the boundary of a handlebody $V$ of genus two and $l_1$ and $l_2$ bound disjoint disks $D_1$ and $D_2$ in $V$, a curve obtained by a band-sum from $\{l_1,l_2\}$ also bounds a disk $D'_1$ in $V$ which is disjoint from $D_{1}\cup D_{2}$. 
We also call this $D'_1$ a disk obtained by a band sum from $\{D_1, D_2\}$.
\end{defini}

\begin{defini}(A sequence of letters and  a word)\\
Let $\{l_1,l_2\}$ be a cut system of $S$. 
%Suppose $D$ and $E$ have orientations. 
For an oriented simple closed curve $l$ in $S$, we assign a cyclic sequence of letters $s_{\{l_1,l_2\}}(l)$ as follows: 
Give $l_1$ and $l_2$ letters $x_{l_1}$ and $x_{l_2}$, respectively. 
Follow $l$ along its orientation from arbitrary point on it, and read its intersections with $l_1$ and $l_2$ with signs (, we write the sequence from left to right). 
We decide the signs so that if $l$ is hit by $l_i$ from the right (left) of $l$, then the letter corresponding to this intersection is $x_{l_i}$ ($x^{-1}_{l_i}$, respectively) for $i=1,2$. 
Note that $s_{\{l_1,l_2\}}(l)$ is not reduced as a cyclic word i.e. may have $x_{l_1}x^{-1}_{l_1}$ and so on. 
By reducing $s_{\{l_1,l_2\}}(l)$ as a cyclic word, we get a cyclic word $w_{\{l_1,l_2\}}(l)$. 
If we give $l_i$ the other orientation, then the signs of all $x_{l_i}$ in $s_{\{l_1,l_2\}}(l)$ and $w_{\{l_1,l_2\}}(l)$ are reversed for $i=1,2$. 
Clearly, $s_{\{l_1,l_2\}}(l)=w_{\{l_1,l_2\}}(l)$ is equivalent to that $s_{\{l_1,l_2\}}(l)$ is cyclically reduced. 
We will write cyclic sequences of letters and words as if they were not cyclic because of space limitation, and will omit the term ``cyclic''. 
\end{defini}

\begin{rmk} \label{changeunderbs} (Change of $w_{\{l_1,l_2\}}(l)$ under a band sum)\\
Let $\{l_1,l_2\}$ be a cut system of $S$. 
Suppose $\{l'_1,l_2\}$ is a cut system of $S$ obtained by a band sum from $\{l_1,l_2\}$ using a band $\alpha$, which starts from $l_1$ and ends on $l_2$. 
Let $l$ be an oriented simple closed curve in $S$. 
Then $w_{\{l'_1,l_2\}}(l)$ is computed from $w_{\{l_1,l_2\}}(l)$: 
Isotope $l$ so that its endpoints are disjoint from the endpoints of $\alpha$, and 
regard $l'_1$ as the boundary of (sufficiently small) $N(l_1\cup \alpha \cup l_2)$ which is neither $l_1$ nor $l_2$. 
Since as a word (not a sequence of letters) inessential intersections of $l$ and $l'_1$ are canceled, we can compute $w_{\{l'_1,l_2\}}(l)$ with this representative. 
The intersections of $l$ and $l'_1$ is decomposed into three: Coming from that of $l$ and $\alpha$, coming from that of $l$ and $l_1$, and coming from that of $l$ and $l_2$. 
The intersections coming from $l$ and $\alpha$ is canceled as a word. 
In the other cases, results depends on the sides on which the endpoints of $\alpha$ (as an arc on a planar diagram with respect to $\{l_1,l_2\}$): \\
(1) When the starting point of $\alpha$ is on $l^{+}_{1}$ and terminal point of $\alpha$ is on $l^{+}_{2}$ (or when the starting point of $\alpha$ is on $l^{-}_{1}$ and terminal point of $\alpha$ is on $l^{+}_{2}$), every intersection corresponds to $x_{l_1}$ with respect to $\{x_{l_1},x_{l_2}\}$ becomes $x_{l'_{1}}$ with respect to $\{x_{l'_{1}},x_{l_2}\}$ and that corresponds to $x_{l_2}$ with respect to $\{x_{l_1},x_{l_2}\}$ becomes $x_{l_2}x_{l'_{1}}$ with respect to $\{x_{l'_{1}},x_{l_2}\}$. 
Thus $w_{\{l'_{1},l_2\}}(l)$ is obtained from $w_{\{l_1,l_2\}}(l)$ by replacing $x_{l_1}$ (and $x^{-1}_{l_1}$) to $x_{l'_{1}}$ (and $x^{-1}_{l'_{1}}$) and $x_{l_2}$ (and $x^{-1}_{l_2}$) to $x_{l_2}x_{l'_{1}}$ (and $x^{-1}_{l'_{1}}x^{-1}_{l_2}$).\\
(2) When the starting point of $\alpha$ is on $l^{+}_{1}$ and terminal point of $\alpha$ is on $l^{-}_{2}$ (or when the starting point of $\alpha$ is on $l^{-}_{1}$ and terminal point of $\alpha$ is on $l^{+}_{2}$), every intersection corresponds to $x_{l_1}$ with respect to $\{x_{l_1},x_{l_2}\}$ becomes $x^{-1}_{l'_{1}}$ with respect to $\{x_{l'_{1}},x_{l_2}\}$ and that corresponds to $x_{l_2}$ with respect to $\{x_{l_1},x_{l_2}\}$ becomes $x_{l'_{1}}x_{l_2}$ with respect to $\{x_{l'_{1}},x_{l_2}\}$. 
Thus $w_{\{l'_1,l_2\}}(l)$ is obtained from $w_{\{l_1,l_2\}}(l)$ by replacing $x_{l_1}$ (and $x^{-1}_{l_1}$) to $x^{-1}_{l'_1}$ (and $x_{l'_{1}}$) and $x_{l_2}$ (and $x^{-1}_{l_2}$) to $x_{l'_{1}}x_{l_2}$ (and $x^{-1}_{l_2}x^{-1}_{l'_{1}}$).
\end{rmk}

\begin{defini}(A corresponding arc)\\
Let $\{l_1,l_2\}$ be a cut system of $S$ and $l$ an oriented simple closed curve in $S$. 
Suppose $s_{\{l_1,l_2\}}(l)$ has one of $x_{l_1}x^{-1}_{l_1}$,$x^{-1}_{l_1}x_{l_1}$,$x_{l_2}x^{-1}_{l_2}$ and $x^{-1}_{l_2}x_{l_2}$, say $x_{l_1}x^{-1}_{l_1}$. 
This subsequence of letters corresponds to a subarc $c$ of $l$ starting from and ending on $l^{+}_{1}$ in the planar diagram $\mathcal{P}_{\{l_1,l_2\}}$. 
This $c$ separates other three boundary components of $\mathcal{P}_{\{l_1,l_2\}}$ into two sets, none of which are empty since $l$ intersects with $l_1$ essentially. 
Thus there is an oriented arc $\alpha$ on $\mathcal{P}_{\{l_1,l_2\}}$ starting from $l^{+}_{1}$ and ending on another boundary component, denoted by $A$, such that $c$ is the interior of the boundary of $N\left( \alpha \cup A;{\rm int}(\mathcal{P}_{\{l_1,l_2\}})\right)$. 
This $\alpha$ determines $c$ except for the orientation, and is called the {\it corresponding arc} of $c$. 
This $A$ is not $l^{-}_{1}$ since otherwise $|l \cap l^{+}_{1}|$ were grater than $|l \cap l^{-}_{1}|$. 
Note that if there is another subarc $c'$ of $l$ representing $x_{l_1}x^{-1}_{l_1}$, then its corresponding arc $\alpha'$ is isotopic (endpoints can move on cut ends) to $\alpha$. 
This is because $c$ and $c'$ would intersect otherwise.
\end{defini}

\begin{obs} \label{cc-1}
Let $\{l_1,l_2\}$ be a cut system of $S$ and $l$ an (oriented) simple closed curve on $S$. 
If there are $n(\geq0)$ subarcs of $l$ starting from and ending on $l^{+}_1$ in $\mathcal{P}_{\{l_1,l_2\}}$ (corresponding to $x_{l_1}x^{-1}_{l_1}$), then there are also $n$ subarcs of $l$ starting from and ending on $l^{-}_1$ (corresponding to $x^{-1}_{l_1}x_{l_1}$).
\end{obs}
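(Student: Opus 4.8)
\section*{Proof proposal}

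The plan is to reinterpret the statement as a count of properly embedded arcs in a $4$-holed sphere, establish a linear relation among the relevant counts by an elementary endpoint count, and then close the gap by an ``enclosure'' argument that uses disjointness of the arcs and minimality of intersection.

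Cutting $S$ along $l_1\cup l_2$ turns $l$ into a disjoint family $\mathcal A$ of properly embedded arcs in $\mathcal P:=\mathcal P_{\{l_1,l_2\}}$, which is a $4$-holed sphere since $\{l_1,l_2\}$ is a cut system; the endpoints of these arcs lie on $l_1^{+}\cup l_1^{-}\cup l_2^{+}\cup l_2^{-}$, each point of $l\cap l_1$ contributes exactly one endpoint on $l_1^{+}$ and one on $l_1^{-}$, and similarly for $l_2$. By the definitions of $s_{\{l_1,l_2\}}(l)$ and of corresponding arcs, a subarc of $l$ that starts from and ends on $l_1^{+}$ (corresponding to $x_{l_1}x_{l_1}^{-1}$) is exactly an arc of $\mathcal A$ with both endpoints on $l_1^{+}$, and one that starts from and ends on $l_1^{-}$ (corresponding to $x_{l_1}^{-1}x_{l_1}$) is exactly an arc with both endpoints on $l_1^{-}$. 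Write $n_{\pm}$ for the number of arcs of $\mathcal A$ with both ends on $l_1^{\pm}$, $e$ for the number joining $l_1^{+}$ to $l_1^{-}$, and $k_{\pm}$ for the number with exactly one end on $l_1^{\pm}$ and the other on $l_2^{+}\cup l_2^{-}$. Counting endpoints on $l_1^{+}$ and on $l_1^{-}$ gives $2n_{+}+e+k_{+}=|l\cap l_1|=2n_{-}+e+k_{-}$, so $2(n_{+}-n_{-})=k_{-}-k_{+}$; hence proving $n_{+}=n_{-}$ is the same as proving $k_{+}=k_{-}$. (If $l$ is disjoint from $l_2$ then $k_{+}=k_{-}=0$ and we are done: this recovers the elementary fact that a cyclic word in $x_{l_1}^{\pm1}$ has as many cyclic subwords $x_{l_1}x_{l_1}^{-1}$ as $x_{l_1}^{-1}x_{l_1}$.)

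The key step is: \emph{if $n_{+}>0$ then $k_{+}\ge k_{-}$.} By minimality of $l\cap(l_1\cup l_2)$ no arc of $\mathcal A$ cuts off a disk from $\mathcal P$ (such a disk would yield a bigon between $l$ and $l_1$ or $l_2$). So, choosing an arc $\gamma$ of $\mathcal A$ with both ends on $l_1^{+}$, it is not $\partial$-parallel and, being an arc with both ends on one boundary circle of a $4$-holed sphere, it separates $\mathcal P$ into a pair of pants and an annulus $R$; by the discussion of corresponding arcs the boundary circle of $\mathcal P$ lying in $R$ is not $l_1^{-}$, so it is one of $l_2^{+},l_2^{-}$, say $l_2^{+}$. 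Since the arcs of $\mathcal A$ are pairwise disjoint, an arc of $\mathcal A$ with an endpoint on $l_2^{+}\subset R$ cannot cross $\gamma$, so it stays in $R$ or leaves only onto $l_1^{+}$; one staying in $R$ with both ends on $l_2^{+}$ would be $\partial$-parallel in the annulus $R$, hence bound a disk, hence give a bigon between $l$ and $l_2$, which is excluded. Therefore every arc of $\mathcal A$ meeting $l_2^{+}$ runs from $l_2^{+}$ to $l_1^{+}$, and there are exactly $|l\cap l_2|$ of them. Counting endpoints on $l_2^{-}$ then gives $|l\cap l_2|=p+k_{-}+2q$, where $p$ is the number of arcs from $l_2^{-}$ to $l_1^{+}$ and $q$ the number with both ends on $l_2^{-}$, while $k_{+}=|l\cap l_2|+p$; subtracting, $k_{+}-k_{-}=2(p+q)\ge0$.

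It remains to combine. If $n_{+}>0$, the key step gives $k_{+}\ge k_{-}$, so $2(n_{+}-n_{-})=k_{-}-k_{+}\le0$, i.e. $n_{-}\ge n_{+}>0$; then the key step applied with $l_1^{-}$ in place of $l_1^{+}$ gives $k_{-}\ge k_{+}$, whence $k_{+}=k_{-}$. The case $n_{-}>0$ is symmetric, and if $n_{+}=n_{-}=0$ the relation $2(n_{+}-n_{-})=k_{-}-k_{+}$ already gives $k_{+}=k_{-}$. In every case $k_{+}=k_{-}$, hence $n_{+}=n_{-}$, as desired. I expect the ``enclosure'' step of the third paragraph to be the main point needing care — showing that an arc meeting the $l_2$-circle cut off by $\gamma$ is forced back to $l_1^{+}$ — since that is exactly where embeddedness of $l$ and minimality of intersection are used, in contrast with the purely formal endpoint bookkeeping of the second paragraph.
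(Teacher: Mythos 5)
Your proof is correct and follows essentially the same route as the paper's: both work in the planar diagram $\mathcal{P}_{\{l_1,l_2\}}$, use the fact that an arc with both endpoints on $l_1^{+}$ encloses exactly one of $l_2^{\pm}$ (not $l_1^{-}$) to restrict which arc types can occur, and then finish by counting endpoints on the four boundary circles. Your packaging via the identity $2(n_{+}-n_{-})=k_{-}-k_{+}$ together with the inequality $k_{+}\ge k_{-}$ and its symmetric counterpart is a cleaner organization of the same case analysis the paper carries out.
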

\begin{proof}
We assume that the corresponding arc of (one of) subarcs of $l$ starting from and ending on $l^{+}_1$ ending on $l^{+}_2$. 
In $\mathcal{P}_{\{l_1,l_2\}}$, $l$ is cut into disjoint arcs since $l$ intersects $l_1$. 
Suppose that there are exactly $n(>0)$ subarcs of $l$ starting from and ending on $l^{+}_1$ in $\mathcal{P}_{\{l_1,l_2\}}$. We want to show that there are $n$ subarcs of $l$ starting from and ending on $l^{-}_1$ in $\mathcal{P}_{\{l_1,l_2\}}$.\\
At first, suppose that there are exactly $m>0$ subarcs of $l$ starting from and ending on $l^{-}_2$ in $\mathcal{P}_{\{l_1,l_2\}}$. 
Then the other subarcs are one of three types: Arcs connecting $l^{+}_1$ and $l^{+}_{2}$, arcs connecting $l^{-}_2$ and $l^{-}_{1}$, and arcs connecting $l^{-}_2$ and $l^{+}_{1}$. 
Let $a,b,c$ be the non-negative numbers of the arcs of the three types, respectively. 
Then we have $2n+a+c=b$ for $|l\cap l_1|$ and $2m+b+c=a$ for $|l\cap l_2|$. By combining these, we have $n+m+c=0$, this contradicts to $n,m>0$. \\
Thus there are no subarcs of $l$ starting from and ending on $l^{-}_2$ in $\mathcal{P}_{\{l_1,l_2\}}$. 
Next, suppose that there are no subarcs of $l$ starting from and ending on $l^{-}_1$ in $\mathcal{P}_{\{l_1,l_2\}}$. 
Then the other subarcs are one of for types: Arcs connecting $l^{+}_1$ and $l^{+}_{2}$, arcs connecting $l^{-}_2$ and $l^{-}_{1}$, arcs connecting $l^{-}_2$ and $l^{+}_{1}$, and arcs connecting $l^{-}_1$ and $l^{+}_{1}$. 
Let $a,b,c,d$ be the non-negative numbers of the arcs of the three types, respectively. 
Then we have $2n+a+c+d=b+d$ for $|l\cap l_1|$ and $b+c=a$ for $|l\cap l_2|$. By combining these, we have $n+c=0$, this contradicts to $n>0$. \\
Thus there is at least one subarc of $l$ starting from and ending on $l^{-}_1$ in $\mathcal{P}_{\{l_1,l_2\}}$. Let $m>0$ be the number of such subarcs. 
Then the other subarcs are one of three types: Arcs connecting $l^{+}_1$ and $l^{+}_{2}$, arcs connecting $l^{-}_2$ and $l^{-}_{1}$, and arcs connecting $l^{-}_1$ and $l^{+}_{1}$. 
Let $a,b,c$ be the non-negative numbers of the arcs of the three types, respectively. 
Then we have $2n+a=2m+b$ for $|l\cap l_1|$ and $a=b$ for $|l\cap l_2|$. By combining these, we have $n=m$, this completes the proof. 
\end{proof}

\begin{obs}\label{noncoexist}
Let $\{l_1,l_2\}$ be a cut system of $S$ and $l$ an oriented simple closed curve on $S$. 
If there is a subsequence of letters $x_{l_1}x^{-1}_{l_1}$ in $s_{\{l_1,l_2\}}(l)$, then there are no subsequences of letters $x_{l_2}x^{-1}_{l_2}$ in $s_{\{l_1,l_2\}}(l)$. 
Moreover, if there is another oriented simple closed curve $l'$ on $S$ disjoint from $l$, then there are no subsequences of letters $x_{l_2}x^{-1}_{l_2}$ in $s_{\{l_1,l_2\}}(l')$, neither.
\end{obs}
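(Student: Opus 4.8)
The plan is to argue purely combinatorially on planar diagrams, mirroring the proof of Observation \ref{cc-1}. First I would record the geometric meaning of the hypothesis: a subsequence $x_{l_1}x^{-1}_{l_1}$ in $s_{\{l_1,l_2\}}(l)$ corresponds to a subarc of $l$ in $\mathcal{P}_{\{l_1,l_2\}}$ with both endpoints on $l_1^{+}$ (or, after reversing roles, both on $l_1^{-}$; by Observation \ref{cc-1} one occurs iff the other does). Similarly a subsequence $x_{l_2}x^{-1}_{l_2}$ would correspond to a subarc of $l$ with both endpoints on $l_2^{+}$ (equivalently $l_2^{-}$). Let $\alpha$ be the corresponding arc of the first; it runs from $l_1^{+}$ to one of the other three boundary components, and as noted in the definition of corresponding arc that component is not $l_1^{-}$, so $\alpha$ ends on $l_2^{+}$ or $l_2^{-}$; up to relabeling the orientation of $l_2$, say it ends on $l_2^{+}$.

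Next I would run the same counting argument used in Observation \ref{cc-1}. Suppose for contradiction that there is also a subarc of $l$ with both endpoints on $l_2^{-}$. Then every arc of $l$ in $\mathcal{P}_{\{l_1,l_2\}}$ is one of: both endpoints on $l_1^{+}$; both on $l_2^{-}$; $l_1^{+}$ to $l_2^{+}$; $l_2^{-}$ to $l_1^{-}$; or $l_2^{-}$ to $l_1^{+}$ — because an arc running from $l_1^{+}$ must (by disjointness of $l$ from itself and the placement of $\alpha$) avoid certain configurations exactly as in the cited proof, and similarly the presence of an arc with both ends on $l_2^{-}$ constrains the remaining types. Writing $n,m>0$ for the numbers of the first two types and $a,b,c\geq 0$ for the last three, counting $|l\cap l_1|$ gives $2n + a + c = b$ (the arcs hitting $l_1$ are: the $x_{l_1}x^{-1}_{l_1}$-arcs twice, plus the $l_1^{+}$–$l_2^{+}$ arcs, the $l_2^{-}$–$l_1^{-}$ arcs, and the $l_2^{-}$–$l_1^{+}$ arcs), while counting $|l\cap l_2|$ gives $2m + b + c = a$. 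Adding, $n + m + c = 0$, contradicting $n,m>0$. This kills the coexistence of $x_{l_1}x^{-1}_{l_1}$ and $x_{l_2}x^{-1}_{l_2}$ in $s_{\{l_1,l_2\}}(l)$.

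For the "moreover" part, let $l'$ be a simple closed curve disjoint from $l$. Since $l$ realizes a subarc with both endpoints on $l_1^{+}$ whose corresponding arc $\alpha$ goes to $l_2^{+}$, and $l'$ is disjoint from $l$, any arc of $l'$ in $\mathcal{P}_{\{l_1,l_2\}}$ is forced into the same restricted list of endpoint-types as the arcs of $l$: it cannot cross the region cut off by the $x_{l_1}x^{-1}_{l_1}$-arc of $l$ together with $l_1^{-}$, nor the region cut off by the corresponding $x^{-1}_{l_1}x_{l_1}$-arc, so $l'$ cannot have an arc with both endpoints on $l_2^{-}$ without meeting $l$. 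Running the identical $|l'\cap l_1|$ and $|l'\cap l_2|$ count with $l'$ in place of $l$ (now $n$ can be $0$, but $m$, the number of $l_2^{-}$-$l_2^{-}$ arcs of $l'$, is assumed $>0$) again yields $n+m+c=0$, hence $m=0$, a contradiction; so $s_{\{l_1,l_2\}}(l')$ has no $x_{l_2}x^{-1}_{l_2}$.

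The main obstacle I anticipate is making fully precise the claim that the presence of $\alpha$ (ending on $l_2^{+}$) rules out the combinatorial types other than the five listed, both for $l$ and for the disjoint curve $l'$; this is the same kind of "an extra arc would have to cross $c$ or $\alpha$'' planarity argument used implicitly in Observation \ref{cc-1} and in the definition of the corresponding arc, and it must be spelled out carefully — in particular, one should note that after fixing the side $l_2^{+}$ for $\alpha$, a subarc of $l$ (or $l'$) with both endpoints on $l_2^{-}$ lies on the opposite side of the separating arc $c\cup(\text{part of }l_1^{+})$ from the rest, forcing the endpoint bookkeeping above. Once that is in place, the arithmetic is immediate.
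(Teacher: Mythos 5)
Your plan defers the one step that actually carries the content, and the counting framework you build around it cannot supply that step. Everything rests on the claim that the arcs of $l$ (and of $l'$) in $\mathcal{P}_{\{l_1,l_2\}}$ fall into exactly the five endpoint types you list. As written that list is not even consistent with Observation \ref{cc-1}: it omits arcs with both endpoints on $l_1^{-}$, which occur in the same positive number $n$ as the $l_1^{+}$--$l_1^{+}$ arcs, and it omits arcs with both endpoints on $l_2^{+}$, which occur in the same number $m$ as the $l_2^{-}$--$l_2^{-}$ arcs you assume exist. If one restores all a priori possible endpoint types with these forced multiplicities, the two incidence equations (for $|l\cap l_1|$ and $|l\cap l_2|$) become consistent and yield no contradiction at all. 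So the arithmetic is not where the proof lives: the contradiction must come from \emph{geometrically} excluding a subarc with both endpoints on $l_2^{+}$, and that exclusion is precisely the statement being proved. In short, the counting is scaffolding around a gap.

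The missing step is, in fact, the paper's entire proof, and it is the mechanism you yourself flag as ``the main obstacle.'' The subarc $c$ of $l$ realizing $x_{l_1}x^{-1}_{l_1}$, together with a subarc of $l_1^{+}$, cuts off from $\mathcal{P}_{\{l_1,l_2\}}$ an annular region whose only other boundary circle is $l_2^{+}$ (this is what it means for the corresponding arc of $c$ to end on $l_2^{+}$). Any subarc $d$ of $l$ or of $l'$ with both endpoints on $l_2^{+}$ is essential (minimal position), so its corresponding arc ends on $l_1^{+}$ or $l_1^{-}$, both of which lie outside that annulus; hence $d$ must leave the annulus, and since its interior cannot meet $\partial\mathcal{P}_{\{l_1,l_2\}}$ it must cross $c$ --- impossible, because $c$ and $d$ are disjoint subarcs of the simple curve $l$, or lie on the disjoint curves $l$ and $l'$. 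Observation \ref{cc-1} then converts any $x_{l_2}x^{-1}_{l_2}$ pair into such a subarc with both endpoints on $l_2^{+}$ (whichever side of $l_2$ the arc $c$ happens to encircle), which settles both assertions simultaneously. I would replace the counting argument by this direct crossing argument.
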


\begin{proof}
Take a subarc $c$ of $l$ in $\mathcal{P}_{\{l_1,l_2\}}$ representing $x_{l_1}x^{-1}_{l_1}$ in $s_{\{l_1,l_2\}}(l)$. 
We assume that the corresponding arc of $c$ connects $l^{+}_{1}$ and $l^{+}_2$. 
Suppose for contradiction that $l$ (or $l'$, disjoint from $l$) has a subarc representing $x_{l_2}x^{-1}_{l_2}$ in $s_{\{l_1,l_2\}}(l)$ (or in $s_{\{l_1,l_2\}}(l')$). 
Then by Observation \ref{cc-1}, $l$ (or $l'$) has a subarc in $\mathcal{P}_{\{l_1,l_2\}}$ both of whose endpoints are on $l^{+}_{2}$. 
This subarc would intersect $c$. 
\end{proof}

\begin{obs} \label{parallel}
Let $\{l_1,l_2\}$ be a cut system of $S$, $c$ be an oriented arc in $\mathcal{P}_{\{l_1,l_2\}}$ starting from and ending on $l^{+}_1$ (or starting from and ending on $l^{-}_{1}$) or 
a curve obtained by a band sum operation from $\{l_1,l_2\}$ using some arc. 
Suppose there are consecutive simple (unoriented) parallel arcs $a_{1}, \dots, a_{n}$ connecting $l^{+}_2$ and $l^{-}_2$ in $\mathcal{P}_{\{l_1,l_2\}}$ such that the set of the endpoints on $l^{+}_{2}$ are ordered counterclockwise as the numbering. 
Then a part of $c$ or $-c$ (, which is $c$ with the other orientation) hits consecutively $a_{1},\dots,a_{n}$. 
Moreover if $a_{1},\dots,a_{n}$ have a parallel orientation, then the signs of the intersections of $a_{1},\dots,a_{n}$ and the above part of $c$ or $-c$ is same.
\end{obs}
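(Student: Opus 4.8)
The plan is to reduce the statement, in both cases for $c$, to the following elementary picture: a properly embedded arc in a \emph{disk} meeting a family of disjoint spanning arcs of that disk.

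First I would record the combinatorics of $\mathcal{P}_{\{l_1,l_2\}}$ cut along $a_1\cup\cdots\cup a_n$. Since $\{l_1,l_2\}$ is a cut system, $\mathcal{P}_{\{l_1,l_2\}}$ is a four--holed sphere; as $a_1,\dots,a_n$ are disjoint, mutually parallel, and consecutive with feet on $l^{+}_2$ ordered as the numbering, cutting along them yields $n-1$ rectangles $R_1,\dots,R_{n-1}$, where $R_i$ has sides $a_i$, $a_{i+1}$ and two subarcs of $l^{+}_2$, $l^{-}_2$, together with one remaining three--holed sphere $R_0$ carrying $l^{+}_1$ and $l^{-}_1$ on its boundary. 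Put $\hat R=\overline{R_1\cup a_2\cup R_2\cup\cdots\cup a_{n-1}\cup R_{n-1}}$ (for $n=1$ take $\hat R=N(a_1;\mathcal{P}_{\{l_1,l_2\}})$). Then $\hat R$ is an embedded disk containing all of $a_1\cup\cdots\cup a_n$, whose boundary consists of $a_1$, $a_n$ and two subarcs of $l^{+}_2$, $l^{-}_2$, and inside which $a_1,\dots,a_n$ appear as $n$ consecutive spanning arcs: the ``left'' edge $a_1$, the interior arcs $a_2,\dots,a_{n-1}$, and the ``right'' edge $a_n$. All intersections of $c$ with $a_1\cup\cdots\cup a_n$ therefore take place inside $\hat R$.

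Next I would analyze $c\cap\hat R$. In either case $c$ is disjoint from $l^{+}_2\cup l^{-}_2$ (boundary components of $\mathcal{P}_{\{l_1,l_2\}}$), its endpoints (if any) lie on $l^{+}_1$ or $l^{-}_1$, which are disjoint from $\hat R$, and in the band--sum case $c$ is essential in $\mathcal{P}_{\{l_1,l_2\}}$ (being parallel to neither $l_1$ nor $l_2$ in $S$), hence not contained in the disk $\hat R$. So every component of $c\cap\hat R$ is a properly embedded arc of $\hat R$ with both endpoints on $a_1\cup a_n$. We may assume $c$ meets $a_1\cup\cdots\cup a_n$ (otherwise there is nothing to prove); this is automatic whenever $c$ separates $l^{+}_2$ from $l^{-}_2$ in $\mathcal{P}_{\{l_1,l_2\}}$, which is the case in the applications — always for a band sum $c=l'_1$, since the only other separation pattern of the four boundary curves would make $c$ separating in $S$, contradicting that $\{l'_1,l_2\}$ is a cut system, and for a corresponding arc, since such an arc cuts off a region containing exactly one of $l^{+}_2,l^{-}_2$. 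So there is a component $e$ of $c\cap\hat R$. Using that $c$ and $a_1\cup\cdots\cup a_n$ meet minimally (no bigons), a routine innermost--disk argument inside the disk $\hat R$ rules out both (i) a component of $c\cap\hat R$ with both endpoints on $a_1$ or both on $a_n$, and (ii) a component meeting some $a_i$ more than once: starting from a would--be innermost cut--off subdisk of $\hat R$, one peels off first an innermost subarc of $a_1\cup\cdots\cup a_n$ and then an innermost subarc of $c$, and is left with an honest bigon between $c$ and some $a_i$, a contradiction. Hence $e$ runs from $a_1$ to $a_n$ across $\hat R$ and meets $a_1,a_2,\dots,a_n$ exactly once each, necessarily in this order; read with the orientation of $c$ or of $-c$ (whichever gives the order $a_1,\dots,a_n$ rather than $a_n,\dots,a_1$), $e$ is the asserted part of $c$ or $-c$.

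For the sign claim, suppose $a_1,\dots,a_n$ are co--oriented, say all directed from $l^{+}_2$ to $l^{-}_2$. Between its crossings with $a_i$ and $a_{i+1}$ the arc $e$ lies in the rectangle $R_i$, entering through $a_i$ and leaving through $a_{i+1}$; since $a_i$ and $a_{i+1}$ point the same way along $R_i$, comparing the local intersection frames $(\dot e,\dot a_i)$ and $(\dot e,\dot a_{i+1})$ against the orientation of $\mathcal{P}_{\{l_1,l_2\}}$ restricted to $R_i$ shows the two signs agree; iterating over $i$ gives equal signs at all of $a_1,\dots,a_n$.

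I expect the main obstacle to be the isolation of the correct disk $\hat R$, the ``convex hull'' of the band $a_1\cup\cdots\cup a_n$, so that all the intersection combinatorics becomes innermost--disk surgery \emph{inside a disk} — rather than inside the two subsurfaces into which $c$ separates $\mathcal{P}_{\{l_1,l_2\}}$, where an ``innermost'' subarc need not bound a disk — and, relatedly, pinning down in the arc case which region $c$ separates off, so that $c$ genuinely meets the whole block $a_1,\dots,a_n$. Once the disk picture is set up, the bigon elimination, the monotonicity of the crossing order, the matching of $c$ with $-c$, and the sign bookkeeping are all routine.
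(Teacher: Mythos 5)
Your proof is correct, but it takes a genuinely different route from the paper's. The paper's argument is a direct geometric identification: it takes the corresponding arc (or band) $\alpha$ of $c$, notes that by construction $c$ is isotopic to the frontier of a regular neighborhood of $\alpha$ together with the boundary circle(s) of $\mathcal{P}_{\{l_1,l_2\}}$ containing the endpoints of $\alpha$, and then simply reads the conclusion off the resulting picture: a strand of $c$ runs once around $l^{+}_2$ (or $l^{-}_2$), crossing each $a_i$ exactly once near its endpoint on that circle, hence consecutively in the cyclic order of those endpoints and with coherent signs. You instead assemble the parallel block into the disk $\hat R$ and run a bigon-criterion argument there, using the specific form of $c$ only to guarantee that $c$ separates $l^{+}_2$ from $l^{-}_2$ and therefore meets every $a_i$. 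Each approach has its advantages: yours is more general (it applies verbatim to any essential embedded arc or curve in minimal position that separates $l^{+}_2$ from $l^{-}_2$, with no need to exhibit $\alpha$), and it isolates cleanly why the intersections must be monotone and coherently signed; the paper's is shorter, avoids your extra non-vacuousness step (that $c$ actually meets the block, which for the explicit frontier description is automatic), and produces exactly the ``strand encircling $l^{\pm}_2$'' picture that is reused repeatedly in Section 4 (the four arcs, the three arcs, etc.). One small point worth making explicit in your write-up: the hypothesis that the corresponding arc of $c$ ends on $l^{+}_2$ or $l^{-}_2$ rather than on $l^{-}_1$ (which the paper builds into the definition of corresponding arc via the intersection-count argument) is exactly what rules out the separation pattern $\{l^{-}_1\}\mid\{l^{+}_2,l^{-}_2\}$ in the arc case; without it your non-vacuousness claim would fail.
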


\begin{proof}
Take the corresponding arc (or the band) of $c$. We call this arc (or band) $\alpha$. 
We assume that the endpoint of $\alpha$ on $l_2$ in $S$ is on $l^{+}_{2}$ in $\mathcal{P}_{\{l_1,l_2\}}$. The case where it is on $l^{-}_{2}$ in $\mathcal{P}_{\{l_1,l_2\}}$ is similar. 
Then the statements hold by noting that $c$ is isotopic to the boundary of the neighborhood of $\alpha$ and one of (or both of) boundaries which have endpoints of $\alpha$. 
See Figure \ref{parallels}.
\end{proof}

\begin{figure}[htbp]
 \begin{center}
  \includegraphics[width=60mm]{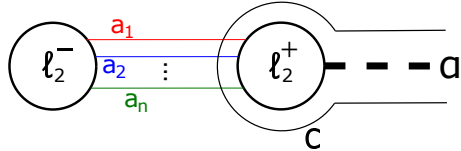}
 \end{center}
 \caption{$n$ parallel arcs and $c$}
 \label{parallels}
\end{figure}

%\begin{obs}\label{ck1} [ck1]
%Let $\{D,E\}$ be a cut system of $V$, $l$ an oriented simple closed curve on $\partial V$. 
%If $s_{\{D,E\}}(l)$ has $x_{D}x^{n}_{E}x^{-1}_{D}$ for non zero integer $n$, then $s_{\{D,E\}}(l)=w_{\{D,E\}}(l)$.
%\end{obs}

\begin{obs}\label{ck2} \cite{ck2}
Let $\{l_1,l_2\}$ be a cut system of $S$, $l$ an oriented simple closed curve on $S$. 
Suppose $s_{\{l_1,l_2\}}(l)$ has one of $x^{2}_{l_1}$ and $x^{-2}_{l_1}$, and also has one of $x^{2}_{l_2}$ and $x^{-2}_{l_2}$. 
Then $s_{\{l_1,l_2\}}(l)=w_{\{l_1,l_2\}}(l)$.
\end{obs}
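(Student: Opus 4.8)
The plan is to prove the contrapositive: assuming $s_{\{l_1,l_2\}}(l)$ is \emph{not} cyclically reduced, I will exhibit two subarcs of $l$, read off inside the planar diagram $\mathcal{P}:=\mathcal{P}_{\{l_1,l_2\}}$, which are forced to meet, contradicting simplicity of $l$. First I would use the symmetry of the hypothesis. The statement ``$s$ contains $x^{2}_{l_1}$ or $x^{-2}_{l_1}$, and contains $x^{2}_{l_2}$ or $x^{-2}_{l_2}$'' is unchanged when we reverse the orientation of $l_1$, when we reverse the orientation of $l_2$, and when we swap $l_1\leftrightarrow l_2$. Reversing $l_1$'s orientation carries a subsequence $x^{-1}_{l_1}x_{l_1}$ to $x_{l_1}x^{-1}_{l_1}$, and swapping the indices carries an $l_2$-cancellation to an $l_1$-cancellation; hence, after relabelling, we may assume that $s_{\{l_1,l_2\}}(l)$ contains the subsequence $x_{l_1}x^{-1}_{l_1}$, while the hypothesis still guarantees in particular that it contains $x^{2}_{l_2}$ or $x^{-2}_{l_2}$.

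Next I would translate both facts into the planar picture. By the discussion accompanying the definition of a corresponding arc, the subsequence $x_{l_1}x^{-1}_{l_1}$ arises from a subarc $c$ of $l$ in $\mathcal{P}$ with both endpoints on $l^{+}_{1}$; since $l$ meets $l_1$ minimally, $c$ is essential and has a corresponding arc from $l^{+}_{1}$ to a boundary component $A$ with $A\neq l^{-}_{1}$, so $A\in\{l^{+}_{2},l^{-}_{2}\}$. Writing $c$ as the interior of the boundary of a regular neighborhood of $\alpha\cup A$ shows that $c$ cuts off from $\mathcal{P}$ an annular region $R$ whose boundary is $A$ together with $c\cup\sigma$, where $\sigma$ is a subarc of $l^{+}_{1}$, and $R$ is disjoint from $l^{-}_{1}$ and from the member $B$ of $\{l^{+}_{2},l^{-}_{2}\}$ different from $A$; in particular $B$ lies in the other component of $\mathcal{P}$ cut along $c$. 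On the other hand, two consecutive equally-signed crossings of $l$ with $l_2$ are joined by a subarc $d$ of $l$ which is not of the form $x_{l_2}x^{-1}_{l_2}$ or $x^{-1}_{l_2}x_{l_2}$, so it does not have both endpoints on the same copy of $l_2$; thus $d$ runs between $l^{+}_{2}$ and $l^{-}_{2}$ in $\mathcal{P}$.

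Finally I would derive the contradiction. One endpoint of $d$ lies on $A\subset\partial R$, and since $R$ is the side of $\mathcal{P}$ on which $A$ borders, $d$ enters $R$ there; its other endpoint lies on $B$, which is disjoint from $\overline{R}$. Hence $d$ must leave $R$ across $\partial R$. It cannot cross $\sigma$ or $A$ in its interior (these lie in $\partial\mathcal{P}$), and its endpoints lie on $l^{+}_{2}\cup l^{-}_{2}$, not on $\sigma\subset l^{+}_{1}$; so $d$ must meet $c$. But $c$ and $d$ are distinct subarcs of the simple closed curve $l$ with interiors in $\mathrm{int}\,\mathcal{P}$, so they are disjoint. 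This contradiction shows that $s_{\{l_1,l_2\}}(l)$ is cyclically reduced, i.e.\ $s_{\{l_1,l_2\}}(l)=w_{\{l_1,l_2\}}(l)$.

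The only delicate point I expect is the bookkeeping around the region $R$: one must be sure that ``$A\neq l^{-}_{1}$'' really places $l^{+}_{2}$ and $l^{-}_{2}$ on opposite sides of $c$, so that $d$ is genuinely trapped. I would either quote this directly from the definition of a corresponding arc, or argue it by an innermost-arc argument: if the side of $c$ meeting only $l^{-}_{1}$ contained an arc of $l$ with both endpoints on $l^{-}_{1}$, an innermost such arc would cut off a disk meeting no other part of $\partial\mathcal{P}$, giving an isotopy of $l$ that lowers $|l\cap l_1|$ without changing $|l\cap l_2|$, against minimality; a short count then rules out that configuration entirely. Everything else is routine once the two subarcs $c$ and $d$ are in hand.
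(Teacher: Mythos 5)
Your proof is correct and follows essentially the same route as the paper: the hypothesis $x_{l_2}^{\pm 2}$ yields an arc of $l$ joining $l_2^{+}$ to $l_2^{-}$ in $\mathcal{P}_{\{l_1,l_2\}}$, while any cancellation $x_{l_1}^{\phantom{-}}x_{l_1}^{-1}$ would give a subarc (with corresponding arc ending on $l_2^{+}$ or $l_2^{-}$) separating $l_2^{+}$ from $l_2^{-}$, forcing a self-intersection of $l$. The paper packages this separation step as an appeal to Observation \ref{parallel} and you argue it directly via the region cut off by the corresponding arc, but the content is identical.
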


\begin{proof}
Since $s_{\{l_1,l_2\}}(l)$ has one of $x^{2}_{l_1}$ and $x^{-2}_{l_1}$, and also has one of $x^{2}_{l_2}$ and $x^{-2}_{l_2}$, the planar diagram of $l$ on $\mathcal{P}_{\{l_1,l_2\}}$ has an arc connecting $l^{+}_{1}$ and $l^{-}_{1}$, and an arc connecting $l^{+}_{2}$ and $l^{-}_{2}$. 
Then there are no arc of $l$ in $\mathcal{P}_{\{l_1,l_2\}}$ whose endpoints are on one boundary component, otherwise $l$ has self intersection by Observation \ref{parallel}. 
This means $s_{\{l_1,l_2\}}(l)=w_{\{l_1,l_2\}}(l)$.
\end{proof}

Of course, the above definitions, remark and observations also holds if we change the roles of $l_1$ and $l_2$. 

\section{A computation}\label{computation}
We fix a Heegaard splitting $M=V\cup_{S}W$ as in Subsection \ref{hs} and we work on the useful standard diagrams after being minimally intersect. 
In \cite{cho1},\cite{cho2},\cite{ck1},\cite{ck2},\cite{ck3},\cite{ck4}, Cho and Cho, Koda gave finite presentations of Goeritz groups of reducible genus two Heegaard splittings by using their actions on some subcomplexes of the curve complexes of splitting surfaces, which has enough information about the splittings. 
%In a part of [ik], IK proved that the Goeritz groups of Heegaard splittings which have some finiteness about the Hempel distance are finite group. 
We will follow their techniques. 
We use the following subcomplex of the curve complex. In fact, we need not the structure as a complex. Hence we define only as set.

\begin{defini}
$\mathcal{C}$ denotes the set of the isotopy classes of every essential disk $D$ in $V$ such that there exist two disks $\tilde{D'},\tilde{E'}$ in $W$ such that $\{\partial \tilde{D'},\partial \tilde{E'}\}$ is a cut system of $S(=\partial W)$ and $s_{\{\partial \tilde{D}', \partial \tilde{E}'\}}(\partial D)=x^{p_i}_{\partial \tilde{D}'}x^{p_j}_{\partial \tilde{E}'}$ under appropriate orientations of $D$, $\tilde{D}'$ and $\tilde{E}'$.
\end{defini}

Clearly, the Goeritz group acts on $\mathcal{C}$. 
The disk $E$ in Figure \ref{std_diag} is an element of $\mathcal{C}$ by setting $\{\tilde{D'},\tilde{E'}\}=\{D',E'\}$. 
The following claim, we postpone the proof, implies that the structure of $M=V\cup_{S}W$ is very rigid: 

\begin{claim}\label{claim1}
$\mathcal{C}=\{E\}$, and the disks $\tilde{D'},\tilde{E'}$ in $W$ in the condition is $(\tilde{D'}, \tilde{E'})=(D', E')$ or $(E', D')$.
\end{claim}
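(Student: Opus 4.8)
The plan is to show that any disk $D \in \mathcal{C}$ must be isotopic to $E$, and that the only cut systems of $S$ witnessing membership in $\mathcal{C}$ are $\{D', E'\}$ up to order. I would argue via the combinatorics of words with respect to the various cut systems available from the useful standard diagrams, exploiting the structural list at the end of Subsection \ref{minimallyintersect}. The key tension to exploit is that the defining condition $s_{\{\partial\tilde D', \partial\tilde E'\}}(\partial D) = x^{p_i}_{\partial\tilde D'} x^{p_j}_{\partial\tilde E'}$ forces $\partial D$ to intersect $\partial\tilde D'$ exactly $p_i$ times and $\partial\tilde E'$ exactly $p_j$ times, all with the same sign, and moreover forces $s = w$ (the word is already cyclically reduced). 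This is an extremely restrictive ``monotone'' form, and the goal is to show that starting from the known diagram (where $E$ realizes it with $\{D', E'\}$), no other disk in $V$ can.

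First I would record that $\{D', E'\}$ is a cut system of $S$ bounding disks in $W$, and pass to the planar diagram $\mathcal{P}_{\{\partial D', \partial E'\}}$; by the third/fourth bullet items in Subsection \ref{minimallyintersect}, the disks $D_L, D_R, E$ of $V$ have completely explicit, simple pictures there. Next I would take an arbitrary $D \in \mathcal{C}$ with witnessing cut system $\{\partial\tilde D', \partial\tilde E'\}$ in $W$, and analyze $\partial D$ on $\mathcal{P}_{\{\partial D', \partial E'\}}$ using the fact that $\partial D$ is disjoint from the green/purple curves — in particular from $\partial D_L$ (or $\partial D_R$) and $\partial E$, which are themselves disks of $V$. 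The disjointness observations (Observations \ref{cc-1}, \ref{noncoexist}, \ref{parallel}) then pin down the possible arc types of $\partial D$ in this planar diagram: since $\partial E$ provides arcs connecting $\partial D'^{+}$ to $\partial D'^{-}$ and $D_L$ provides $|p_k|$ parallel arcs, any curve disjoint from both has its arc pattern in $\mathcal{P}_{\{\partial D', \partial E'\}}$ heavily constrained, and the ``$2|q'|<p$'' inequalities recorded at the end of the subsection are exactly what rules out the exchanged/twisted alternatives. Then I would translate such a constrained $\partial D$ back through the band-sum relating $D_R$ to $\{E, D_L\}$ (Remark \ref{changeunderbs}) to check consistency, and conclude $\partial D$ is isotopic to $\partial E$. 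For the cut-system part, I would start from $s_{\{\partial\tilde D',\partial\tilde E'\}}(\partial E) = x^{p_i}_{\partial\tilde D'} x^{p_j}_{\partial\tilde E'}$ and run the same word combinatorics in reverse: the word being a single block of $x_{\partial\tilde D'}$'s followed by a single block of $x_{\partial\tilde E'}$'s with $|\partial E \cap \partial\tilde D'| = p_i$, $|\partial E \cap \partial\tilde E'| = p_j$ forces $\{\partial\tilde D', \partial\tilde E'\}$ to be obtained from $\{\partial D', \partial E'\}$ by band sums whose effect on $w_{\{\partial D',\partial E'\}}(\partial E)$ is trivial, and Remark \ref{changeunderbs} (cases (1),(2)) shows the only such band sums return $\{D',E'\}$ itself (up to orientation and order); the hypothesis $q_l \not\equiv \pm 1 \bmod p_l$ is what prevents a nontrivial band sum from accidentally reproducing the monotone word.

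The main obstacle I expect is the case analysis over the eight minimally-intersecting diagrams (Figures \ref{left_qi+_pk}--\ref{right_qj-_qk}) together with the two sub-cases $q'_i q'_k > 0$ / $q'_j q'_k > 0$ of Figure \ref{std_diag}: one must verify that in \emph{every} configuration the disjointness constraints plus the inequalities $p_l - |q'_l| > |q'_l|$ for $l = i,j$ and $p_l \geq 5$, $|q'_l|\geq 2$ genuinely force uniqueness, rather than leaving some sporadic extra disk. I would try to handle this uniformly by phrasing everything in terms of the abstract arc-count data in the four bullet points of Subsection \ref{minimallyintersect} (which were stated to hold ``for any cases'' precisely so one need not revisit each figure), so that the argument reduces to a single linear-algebra/counting computation over $\partial D \cap \partial D_L$, $\partial D \cap \partial E$, $\partial D \cap \partial D'$, $\partial D \cap \partial E'$ with the sign constraints from Observation \ref{parallel}, closing it off with Observation \ref{ck2} to force $s = w$ wherever a $x^{\pm 2}$ pattern appears on each factor.
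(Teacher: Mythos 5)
Your proposal has a genuine gap at its core step. You propose to ``analyze $\partial D$ on $\mathcal{P}_{\{\partial D', \partial E'\}}$ using the fact that $\partial D$ is disjoint from the green/purple curves --- in particular from $\partial D_{L}$ (or $\partial D_{R}$) and $\partial E$.'' But an arbitrary essential disk $D$ in $V$ need not be disjoint from $D_{L}$ or $E$; two essential disks in the same handlebody can intersect, and nothing in the definition of $\mathcal{C}$ forbids this. The bulk of the actual argument is precisely the case where $D$ meets $D_{L}\cup E$ (and, symmetrically, where $\tilde{D'}$ or $\tilde{E'}$ meets $D'\cup E'$): one takes an outermost disk of $D$ cut off by $D\cap(D_{L}\cup E)$, which yields a subarc of $\partial D$ returning to the same boundary component of $\mathcal{P}_{\{\partial D_{L},\partial E\}}$; Observation \ref{cc-1} produces a matching subarc on the oppositely-signed boundary component, and Observation \ref{parallel} together with the explicit arc counts then forces a pair of points of $\partial D\cap\partial\tilde{D'}$ to be separated along $\partial D$ by a pair of points of $\partial D\cap\partial\tilde{E'}$ --- which is incompatible with the two-block word $x^{p_i}_{\partial\tilde{D'}}x^{p_j}_{\partial\tilde{E'}}$. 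That interleaving mechanism is the engine of the proof and is absent from your plan; without it, the intersecting case is simply not handled.

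A second, related gap concerns the witnessing cut system. You treat $\{\tilde{D'},\tilde{E'}\}$ only as something ``obtained from $\{D',E'\}$ by band sums whose effect on $w_{\{\partial D',\partial E'\}}(\partial E)$ is trivial,'' and apply Remark \ref{changeunderbs} to a single band sum. This covers only the situation where $\tilde{D'}$ and $\tilde{E'}$ are already disjoint from $D'\cup E'$ (in which case one of them must equal $D'$ or $E'$ and the other is a single band sum); when $\tilde{D'}$ or $\tilde{E'}$ genuinely intersects $D'\cup E'$ you would need either an induction on intersection number or, as in the paper, a direct outermost-disk argument producing the same interleaving contradiction. Your intuition that the monotone word forces everything, and your appeal to the arc-count bullets of Subsection \ref{minimallyintersect} and the inequalities $p_l-|q'_l|>|q'_l|$, $|q'_l|\geq 2$, are pointed in the right direction, but as written the proposal assumes away the hard cases rather than closing them.
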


Thus every element of the Goeritz group maps $E$ to $E$ (may reverse the orientation),
 and maps $D'$ and $E'$ to themselves (may reverse the orientations) or exchanges $D'$ and $E'$. 
Note that the necessary condition for happening the exchange is $p_{i}=p_{j}$. 
We need two more claims, we postpone the proofs.

\begin{claim}\label{claim2}
There is an element of the Goeritz group $\mathcal{G}(H_{\{i,j\}})$ exchanges $D'$ and $E'$ if and only if $p_{i}=p_{j}$ and $q_{i}\equiv q_{j}$ {\rm mod} $p_{i}$.
\end{claim}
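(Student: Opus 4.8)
The plan is to establish the two directions separately. The ``if'' direction is already essentially in hand: when $p_i=p_j$ and $q_i\equiv q_j$ mod $p_i$, we have $q'_i=q'_j$, and the involution $\iota$ constructed in Subsection \ref{invs} is an element of $\mathcal{G}(H_{\{i,j\}})$ that exchanges $D'$ and $E'$ by its very definition. So the work lies in the ``only if'' direction: assuming some Goeritz element $g$ exchanges $\partial D'$ and $\partial E'$ (up to isotopy and orientation), I would derive $p_i=p_j$ and $q_i\equiv q_j$ mod $p_i$. The divisibility-type input $p_i=p_j$ is already noted right after Claim \ref{claim1}: since $g$ extends over $W$ and exchanges the two members of the cut system $\{\partial D',\partial E'\}$, and since by Claim \ref{claim1} these are the only disks realizing the $\mathcal{C}$-condition, $g$ must send $E$ to $E$; counting intersections then forces $p_i=p_j$ (the role of $p_i$ as the power of $x_{\partial D'}$ and $p_j$ as the power of $x_{\partial E'}$ in $s_{\{\partial D',\partial E'\}}(\partial E)$ gets swapped).

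The substance is then to show $q_i\equiv q_j$ mod $p_i$ under $p_i=p_j$. I would argue by contradiction, assuming $q'_i\neq q'_j$ (which, as noted in the ``useful standard diagrams'' subsection, is equivalent to $q_i\not\equiv q_j$ mod $p_i$ when $p_i=p_j$). The idea is to extract a genuine isotopy invariant of the splitting from the standard diagram that distinguishes the ``$D'$ side'' from the ``$E'$ side'' whenever $q'_i\neq q'_j$. The natural candidate is the combinatorics recorded in the bulleted list at the end of Subsection \ref{minimallyintersect}: on the planar diagram $\mathcal{P}_{\{\partial D_L,\partial E\}}$ the curve $\partial D'$ contributes $p_i-|q'_i|$ parallel arcs of one type, while $\partial E'$ contributes $p_j-|q'_j|$ parallel arcs of the analogous type together with the $(p_k-1)$ blocks of $|q'_j|$ arcs. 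A Goeritz element exchanging $D'$ and $E'$ must send $D_L$ to a disk in $V$ and, modulo the action already pinned down by Claim \ref{claim1} and the structure of the disk complex of $V$, essentially to $D_R$ (or to $D_L$ again); matching the arc-counts before and after then yields $|q'_i|=|q'_j|$, and a finer look at the block structure or at the signs of the intersections (using Observation \ref{parallel}) upgrades this to $q'_i=q'_j$, the desired contradiction.

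The main obstacle I anticipate is controlling \emph{which} disk of $V$ the element $g$ sends $D_L$ to: a priori $g(D_L)$ could be some complicated disk, not obviously $D_R$ or $D_L$. Handling this requires understanding the disks of $V$ well enough — presumably via the primitive/non-primitive dichotomy or via band-sum descriptions as in Figure \ref{band_sum}, together with the fact (from Claim \ref{claim1} and its consequences) that $g$ already acts in a very constrained way on $D'$, $E'$, $E$. Once $g(D_L)$ is known to lie in the short list $\{D_L, D_R\}$ (up to isotopy and orientation), the remainder is the bookkeeping of parallel-arc counts and signs sketched above, which is routine given Observations \ref{cc-1}, \ref{noncoexist} and \ref{parallel}. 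I would therefore spend most of the write-up isolating and proving the statement ``$g(D_L)\in\{D_L,D_R\}$'', and treat the numerical conclusion $q'_i=q'_j$ as the short final step.
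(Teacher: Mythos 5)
Your ``if'' direction and your derivation of $p_i=p_j$ agree with the paper: $\iota$ realizes the exchange when $q'_i=q'_j$, and since every Goeritz element fixes $E$ by Claim \ref{claim1}, comparing $|\partial E\cap\partial D'|=p_i$ with $|\partial E\cap\partial E'|=p_j$ forces $p_i=p_j$. The problem is your route to the congruence $q_i\equiv q_j$ mod $p_i$. It hinges on the statement $g(D_L)\in\{D_L,D_R\}$, which you explicitly defer and never prove, and this is a genuine gap rather than routine bookkeeping: the essential nonseparating disks of $V$ disjoint from $E$ form an infinite family (iterated band sums of $D_L$, $D_R$ and $E$), so there is no short list a priori, and pinning down the image of $D_L$ under an arbitrary Goeritz element would require an analysis comparable in difficulty to the proof of Claim \ref{claim1} itself. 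Until that lemma is supplied, the arc-count comparison on $\mathcal{P}_{\{\partial D_L,\partial E\}}$ versus $\mathcal{P}_{\{\partial D_R,\partial E\}}$ has nothing to compare, so the ``only if'' direction is not established.

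The paper's proof avoids $D_L$ entirely and is considerably more direct. After composing with $h$ if necessary so that $f$ preserves $E$ with its orientation, one tracks the action of $f$ on the finite cyclic sequence of points $\partial E\cap(\partial D'\cup\partial E')$ along $\partial E$ (Figure \ref{int_we}): orientation-preservation on $E$ determines where each intersection point with $\partial D'$ must land among the intersection points with $\partial E'$; following $\partial D'$ from one intersection to the next advances the label by $q'_i$, while following $\partial E'$ advances it by $q'_j$, and equivariance of $f$ forces $q'_i\equiv q'_j$ mod $p_i(=p_j)$. The normalization $2|q'_l|<p_l$ then gives $q'_i=q'_j$, i.e.\ $q_i\equiv q_j$ mod $p_i$. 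I would replace your $D_L$-based plan with this argument, which uses only data already fixed by Claim \ref{claim1}.
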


\begin{claim}\label{claim3}
Every element of the Goeritz group $\mathcal{G}(H_{\{i,j\}})$ mapping $E$ to $E$ preserving the orientation and mapping $D'$ and $E'$ to themselves is the identity element.
\end{claim}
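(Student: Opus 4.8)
The plan is to show that an element $f$ of $\mathcal{G}(H_{\{i,j\}})$ fixing the orientations of $E$, $D'$ and $E'$ acts trivially on a collection of disks large enough to pin down the mapping class. First I would record what is already known: by Claim \ref{claim1}, $f$ fixes $E$ (setwise with orientation, by hypothesis), and fixes $D'$ and $E'$ with their orientations. Working in the planar diagram $\mathcal P_{\{\partial D', \partial E'\}}$, the hypotheses say that $f$ fixes the four boundary circles $\partial D'^{\pm}$, $\partial E'^{\pm}$ and fixes the two-arc picture of $\partial E$ described in Subsection \ref{minimallyintersect}. I would then bring in the remaining distinguished disks $D_L$ and $D_R$: since $D_R$ is the band-sum of $E$ and $D_L$ (Figure \ref{band_sum}), and since $f$ must permute the isotopy classes of meridian disks of $W$, the first real step is to argue that $f$ fixes $D_L$ (hence $D_R$) as well. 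For this I would use the combinatorial rigidity built up in Section \ref{t_o}: the word $w_{\{\partial D', \partial E'\}}(\partial D_L)$ together with the requirement that $\{\partial D_L, \partial E\}$ be a cut system disjoint from the appropriate curves forces $D_L$ to be the unique disk in $W$ with that intersection pattern with $D'$, $E'$, $E$ — much as Claim \ref{claim1} does for $E$ — so $f(D_L)=D_L$ with a determined orientation.

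The second step is to pass to the other side, analyzing the meridian disks of $V$. The standard diagram exhibits disks (the "green and purple" disks of Subsection on standard diagrams, which I will call the meridian system of $V$) whose boundaries are described explicitly in $\mathcal P_{\{\partial D_L, \partial E\}}$ and in $\mathcal P_{\{\partial D_R, \partial E\}}$ by the bulleted list in Subsection \ref{minimallyintersect}. Since $f$ now fixes $\partial D_L$, $\partial D_R$, $\partial E$, $\partial D'$, $\partial E'$ all with orientations, and since the arcs of $\partial D' \cup \partial E'$ on $\mathcal P_{\{\partial D_L, \partial E\}}$ occur in the labelled consecutive parallel blocks of sizes $p_j - |q'_j|$, $p_i - |q'_i|$, and $(p_k-1)$ blocks of size $|q'_j|$ — blocks of pairwise distinct roles that are distinguishable by which boundary components they connect — Observation \ref{parallel} shows $f$ can only act on each block by a global reflection or the identity, and the orientation constraints already fixed rule out the reflection. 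Hence $f$ fixes, up to isotopy and with orientations, a cut system of $S$ together with enough additional curves ($\partial D'$, $\partial E'$, $\partial E$, $\partial D_L$) that the isotopy class of $f$ in the mapping class group of $S$ is the identity: a mapping class of a genus-two surface fixing a cut system and one more curve "filling" the complement is trivial (this is the Alexander-method style argument used throughout \cite{ck1,ck2,ck3,ck4}). Finally I would check that the identity mapping class indeed represents the identity of the Goeritz group, i.e. there is no subtlety with isotopies not fixing $\Sigma$ — but since Goeritz elements are taken up to isotopy through homeomorphisms of $X$ fixing $\Sigma$ setwise, triviality in $\mathrm{MCG}(S)$ suffices.

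The main obstacle I expect is the second step: verifying that fixing the five curves $\partial D', \partial E', \partial E, \partial D_L, \partial D_R$ (with orientations) really does force $f = \mathrm{id}$ in $\mathrm{MCG}(S)$, rather than leaving a residual Dehn twist along some curve disjoint from all of them. Concretely one must check that these curves fill $S$ (their complement is a union of disks), or at least that their union together with the recorded arc patterns leaves no essential simple closed curve disjoint from everything; the explicit diagrams in Figures \ref{left_qi+_pk}–\ref{right_qj-_qk} and the inequalities $p_l - |q'_l| > |q'_l| \ge 2$, $p_l \ge 5$ are exactly what should make this work, but one has to handle the sign cases ($q'_i \gtrless 0$, and the $p_k$ vs.\ $q'_k$ alternative) uniformly, and confirm that in every case the bulleted intersection data determines $f$ up to nothing. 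A secondary, smaller obstacle is the step asserting uniqueness of $D_L$ from its word with respect to $\{\partial D', \partial E'\}$: one should confirm that the relevant argument of Claim \ref{claim1} applies verbatim, using that $2|q'_i| < p_i$ and $2|q'_j| < p_j$ so that the word is cyclically reduced (Observation \ref{ck2}) and the band-sum description via Remark \ref{changeunderbs} pins the disk down.
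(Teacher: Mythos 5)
Your endgame (the Alexander method: show $f$ fixes enough curves pointwise and conclude triviality on each complementary disk) is the same strategy the paper uses, but your plan has a genuine gap at exactly the step you flag as the ``main obstacle,'' and it is routed through an unnecessary and itself unproven detour. The paper's proof never touches $D_L$ or $D_R$: the three curves $\partial D'\cup\partial E'\cup\partial E$ already fill $S$ (the complement is a union of disks, two of them octagons, exhibited in Figure \ref{domains}), so there is nothing to gain from first showing $f(D_L)=D_L$. Moreover, that first step of yours is not free — proving that $D_L$ is the unique disk of $V$ with its intersection pattern would require an argument of the same scale as Claim \ref{claim1}, and nothing in the paper supplies it; asserting it ``applies verbatim'' is not a proof. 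Since the statement only needs $f=\mathrm{id}$ at the end, the clean route is to work with the curves you already control by hypothesis.

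The mechanism you are missing is the pointwise rigidity coming from orientations. Because $g$ preserves the orientation of $E$ and maps $D'$ to $D'$ and $E'$ to $E'$, it permutes the ordered sequence of points of $\partial E\cap(\partial D'\cup\partial E')$ along $\partial E$ compatibly with the labelling of Figure \ref{int_we}, and this forces $g$ to fix each intersection point; that in turn pins down the orientations of $D'$ and $E'$. With every vertex of the graph $\partial D'\cup\partial E'\cup\partial E$ fixed and $g$ orientation-preserving on $S$, each complementary domain is mapped to itself and $g$ is isotopic to the identity on each domain's boundary, so Alexander's trick finishes in one stroke. Your proposal instead defers the whole content to ``check that these curves fill $S$ \dots and confirm that in every case the bulleted intersection data determines $f$ up to nothing,'' which is precisely the part that must be carried out; a residual Dehn twist is excluded only because the complement of $\partial D'\cup\partial E'\cup\partial E$ consists of disks, a fact you never establish. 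As written, the proposal is a correct outline of the right kind of argument with its decisive step left open and an extra unjustified lemma inserted in front of it.
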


Recall that $h$ maps $E$, $D'$ and $E'$ to themselves reversing the orientations, and that $\iota$ (, if it is defined,) maps $E$ to $E$ preserving the orientation and exchanges $D'$ and $E'$. 
By the above claims, we see that the Goeritz group is generated by $h$ and $\iota$ (, if it is defined). 
Moreover, we can see that $h^{2}$, $\iota^{2}$ and $h\iota h\iota$ satisfy the condition in Claim \ref{claim3}, thus they are identity elements. 
This complete the proof of Theorem \ref{main}.

\subsection{A proof of Claim \ref{claim1}}
Take arbitrary $D \in \mathcal{C}$ and fix disks $\tilde{D'},\tilde{E'}$ in $W$ for $D$ in the definition of $\mathcal{C}$. 
We will divide into the cases.\\
(1)The case where $\tilde{D'}$ intersects with $D'\cup E'$. \\
In this case, choose an outermost disk of $\tilde{D'}$ in $\tilde{D'}\setminus (D'\cup E')$. \\

(1.1)The case where $D'$ cuts the outermost disk of $\tilde{D'}$.\\
Consider the diagram of $\partial D_{L}\cup \partial E$ on $\mathcal{P}_{\{\partial D',\partial E'\}}$. 
Note that there are at least $2$-parallel consecutive arcs with same orientations coming from $\partial D_{L}$ connecting $\partial E'^{+}$ and $\partial E'^{-}$ in $\mathcal{P}_{\{\partial D',\partial E'\}}$, and that there are at least $2$-parallel consecutive arcs with same orientations coming from $\partial E$ connecting $\partial E'^{+}$ and $\partial E'^{-}$ in $\mathcal{P}_{\{\partial D',\partial E'\}}$. 
A part of $\tilde{D'}$, corresponding to the outermost disk, is an arc on $\mathcal{P}_{\{\partial D',\partial E'\}}$ starting and ending on $\partial D'^{+}$ (or on $\partial D'^{-}$). 
Thus $s_{\{\partial D_{L},\partial E\}}(\partial \tilde{D'})$ has both of ``$x_{\partial D_{L}}^{2}$ or $x_{\partial D_{L}}^{-2}$'' and ``$x_{\partial E}^{2}$ or $x_{\partial E}^{-2}$'' by Observation \ref{parallel}. 
For $\tilde{E'}$, there are three possibilities: ``$\tilde{E'}$ intersects with $D'\cup E'$'', ``$\tilde{E'}$ is disjoint from $D'\cup E'$ and neither $D'$ nor $E'$'', and ``$\tilde{E'}=E'$''. 
If $\tilde{E'}$ intersects with $D'\cup E'$, every outermost disk of $\tilde{E'}$ in $\tilde{E'}\setminus (D'\cup E')$ is cut by $D'$ by Observation \ref{noncoexist}. 
Thus $s_{\{\partial D_{L},\partial E\}}(\partial \tilde{E'})$ has both of ``$x_{\partial D_{L}}^{2}$ or $x_{\partial D_{L}}^{-2}$'' and ``$x_{\partial E}^{2}$ or $x_{\partial E}^{-2}$'' by Observation \ref{parallel}. 
If $\tilde{E'}$ is disjoint from $D'\cup E'$ and neither $D'$ nor $E'$, then $s_{\{\partial D_{L},\partial E\}}(\partial \tilde{E'})$ has both of ``$x_{\partial D_{L}}^{2}$ or $x_{\partial D_{L}}^{-2}$'' and ``$x_{\partial E}^{2}$ or $x_{\partial E}^{-2}$'' by Observation \ref{parallel}. 
If $\tilde{E'}=E'$, then $s_{\{\partial D_{L},\partial E\}}(\partial \tilde{E'})$ has both of ``$x_{\partial D_{L}}^{2}$ or $x_{\partial D_{L}}^{-2}$'' and ``$x_{\partial E}^{2}$ or $x_{\partial E}^{-2}$'' by looking the useful standard left diagram. 
Therefore we see that in any cases, $s_{\{\partial D_{L},\partial E\}}(\partial \tilde{E'})$ has both of ``$x_{\partial D_{L}}^{2}$ or $x_{\partial D_{L}}^{-2}$'' and ``$x_{\partial E}^{2}$ or $x_{\partial E}^{-2}$''. 
Next consider the diagram of $\partial \tilde{D'}\cup \partial \tilde{E'}$ on $\mathcal{P}_{\{\partial D_{L},\partial E\}}$. 
Since $s_{\{\partial D_{L},\partial E\}}(\partial \tilde{D'})$ has both of ``$x_{\partial D_{L}}^{2}$ or $x_{\partial D_{L}}^{-2}$'' and ``$x_{\partial E}^{2}$ or $x_{\partial E}^{-2}$'', there are two subarcs of $\partial \tilde{D'}$ in $\mathcal{P}_{\{\partial D_{L},\partial E\}}$, one connects $\partial D^{+}_{L}$ and $\partial D^{-}_{L}$, the other connects $\partial E^{+}$ and $\partial E^{-}$. 
And since $s_{\{\partial D_{L},\partial E\}}(\partial \tilde{E'})$ has both of ``$x_{\partial D_{L}}^{2}$ or $x_{\partial D_{L}}^{-2}$'' and ``$x_{\partial E}^{2}$ or $x_{\partial E}^{-2}$'', there are two subarcs of $\partial \tilde{E'}$ in $\mathcal{P}_{\{\partial D_{L},\partial E\}}$, one connects $\partial D^{+}_{L}$ and $\partial D^{-}_{L}$, the other connects $\partial E^{+}$ and $\partial E^{-}$. 
We call these $4$ arcs {\it the four arcs}. See Figure \ref{p_dle1}. 
Note that two of the four arcs, connecting $\partial D^{+}_{L}$ and $\partial D^{-}_{L}$ are parallel, and the others are also parallel (not necessarily consecutive in the diagram).

\begin{figure}[htbp]
 \begin{center}
  \includegraphics[width=50mm]{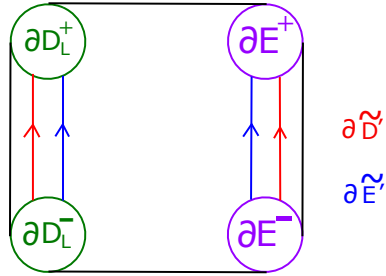}
 \end{center}
 \caption{A part of the diagram of $\partial \tilde{D'}\cup \partial \tilde{E'}$ on $\mathcal{P}_{\{\partial D_{L},\partial E\}}$}
 \label{p_dle1}
\end{figure}

(1.1.1)The case where $D$ intersects with $D_{L}\cup E$.\\
In this case, choose a outermost disk of $D$ in $D\setminus (D_{L}\cup E)$. \\

(1.1.1.1)The case where $D_{L}$ cuts the outermost disk of $D$. \\
Let $c$ be a subarc of $\partial D$ corresponding to the outermost disk. 
This $c$ is a arc on $\mathcal{P}_{\{\partial D_{L},\partial E\}}$ starting and ending on $\partial D^{+}_{L}$ or $\partial D^{-}_{L}$, say $\partial D^{+}_{L}$. 
By Observation \ref{cc-1}, there is a subarc $c'$ of $\partial D$ starting and ending on $\partial D^{-}_{L}$ on $\mathcal{P}_{\{\partial D_{L},\partial E\}}$. 
The corresponding arcs of $c$ and $c'$ are disjoint. 
Moreover, these corresponding arcs are disjoint from the four arcs since the signs of the intersection points of $\partial D$ and $\partial \tilde{D'}$ are same. 
By the signs of the intersection mentioned above, the orientation of $c\cup c'$ has two possibilities. 
In any cases, there must be a pair of two points in $\partial D\cap \partial \tilde{D'}$ and a pair of two points in $\partial D\cap \partial \tilde{E'}$ such that the former pair is separated by the latter pair on $\partial D$. See Figure \ref{ccdisk}. 
This contradicts to the property of $D$ and $\{\tilde{D'},\tilde{E'}\}$. 

\begin{figure}[htbp]
 \begin{center}
  \includegraphics[width=100mm]{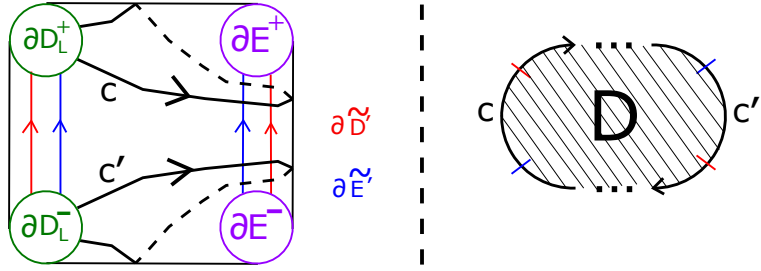}
 \end{center}
 \caption{An example of $c\cup c'$ }
 \label{ccdisk}
\end{figure}

(1.1.1.2)The case where $E$ cuts the outermost disk of $D$.\\
In this case, the argument almost similar to that in (1.1.1.1) leads a contradiction.\\

(1.1.2)The case where $D$ and $D_{L}\cup E$ are disjoint.\\
Note that in the diagram of $\partial D' \cup \partial E'$ in $\mathcal{P}_{\{\partial D_{L},\partial E\}}$,
 there are $(p_{k}-1)$ blocks of $|q'_{j}|(\geq 2)$ consecutive parallel oriented arcs coming from $\partial E'$ connecting $\partial D^{+}_{L}$ and $\partial D^{-}_{L}$, with the parallel orientation. 
Note that in this diagram, there are at least two consecutive parallel arcs connecting $\partial D^{+}_{L}$ and $\partial D^{-}_{L}$ coming from $\partial D'$ with the parallel orientation. 
Note that in this diagram, there are $p_{j}-|q'_{j}|(\geq 3)$ consecutive parallel arcs connecting $\partial E^{+}$ and $\partial E^{-}$ coming from $\partial E'$ with the parallel orientation. 
Note that $E$ intersects with $\partial D'$ more than twice in identical signs. 
See Figure \ref{p_dle2}. 

\begin{figure}[htbp]
 \begin{center}
  \includegraphics[width=50mm]{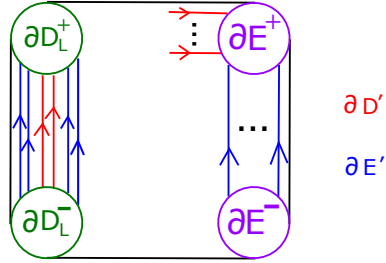}
 \end{center}
 \caption{A part of the diagram of $\partial D'\cup \partial E'$ on $\mathcal{P}_{\{\partial D_{L},\partial E\}}$}
 \label{p_dle2}
\end{figure}

By the above, in the diagram of $\partial D$ in $\mathcal{P}_{\{\partial D',\partial E'\}}$, there are an arc connecting $\partial D'^{+}$ and $\partial D'^{-}$, and two parallel arcs (not necessarily consecutive) connecting $\partial E'^{+}$ and $\partial E'^{-}$ with the same orientations, we say these three arcs {\it the three arcs}. 
See Figure \ref{p_dwew1}.

\begin{figure}[htbp]
 \begin{center}
  \includegraphics[width=40mm]{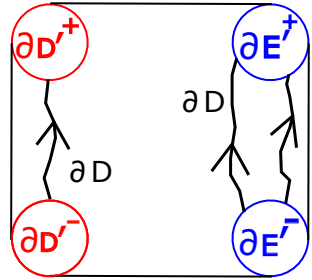}
 \end{center}
 \caption{A part of the diagram of $\partial D$ on $\mathcal{P}_{\{\partial D',\partial E'\}}$}
 \label{p_dwew1}
\end{figure}

Now we are in (1.1), there is a subarc $d$ of $\partial \tilde{D}'$ starting and ending on $\partial D'^{+}$ (or $\partial D'^{-}$), say $\partial D'^{+}$. 
By Observation \ref{cc-1}, there is also a subarc $d'$ of $\partial \tilde{D}'$ starting and ending on $\partial D'^{-}$. 
The corresponding arcs of $d$ and $d'$ are disjoint. 
Moreover, the corresponding arcs of $d$ and $d'$ are disjoint from the three arcs since the signs of the intersection points of $\partial D$ and $\partial \tilde{D'}$ are same. 
By the signs of the intersection mentioned above, the orientation of $d\cup d'$ has two possibilities. 
For $\tilde{E'}$, there are three possibilities: ``$\tilde{E'}$ intersects with $D'\cup E'$ (, and every outermost disk of $\tilde{E'}$ is cut by $D'$ by Observation \ref{noncoexist})'', ``$\tilde{E'}$ is disjoint from $D'\cup E'$ and neither $D'$ nor $E'$'', ``$\tilde{E'}=E'$''. 
In any cases, there must be a pair of two points in $D\cap \tilde{D'}$ and a pair of two points in $D\cap \tilde{E'}$ such that the former pair is separated by the latter pair on $\partial D$, see Figure \ref{possi_three}. 
This contradicts to the property of $D$ and $\{\tilde{D'},\tilde{E'}\}$.  

\begin{figure}[htbp]
 \begin{center}
  \includegraphics[width=120mm]{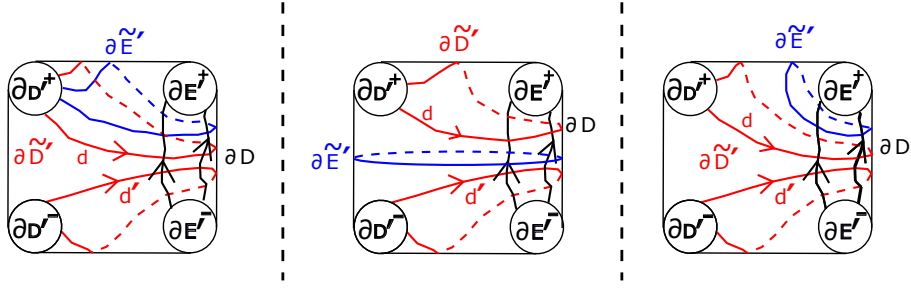}
 \end{center}
 \caption{Three possibilities for $\tilde{E'}$}
 \label{possi_three}
\end{figure}

(1.2)The case where $E'$ cuts the outermost disk of $\tilde{D'}$.\\
By replacing the useful left standard diagram with the useful right standard diagram in the argument in (1.1), we get a contradiction.\\

(2)The case where $\tilde{E'}$ intersects with $D'\cup E'$. \\
By the same argument in (1), we get a contradiction.\\

(3)The case where $\tilde{D'}$ and $\tilde{E'}$ are disjoint from $D'\cup E'$.\\
In this case, at least one of $\tilde{D'}$ and $\tilde{E'}$ is $D'$ or $E'$. \\

(3.1)The case where $\tilde{E'}$ is neither $D'$ nor $E'$.\\
Then $\tilde{D'}$ is $D'$ or $E'$, say $D'$, and $\tilde{E'}$ is obtained by a band sum from $D'$ and $E'$. 
Since $s_{\{\partial \tilde{D'},\partial \tilde{E'}\}}(\partial D)=w_{\{\partial \tilde{D'},\partial \tilde{E'}\}}(\partial D)=x^{p_i}_{\partial \tilde{D'}}x^{p_j}_{\partial \tilde{E'}}$, we have $w_{\{\partial D',\partial E'\}}(\partial D)=(x_{\partial D'}x_{\partial E'})^{p_i}x^{p_j}_{\partial E'}$ or $(x_{\partial E'}x_{\partial D'})^{p_i}x^{-p_j}_{\partial E'}$ (, or $w_{\{\partial D',\partial E'\}}(\partial D)=x^{-p_i}_{\partial D'}(x_{\partial D'}x_{\partial E'})^{p_j}$ or $x^{p_i}_{\partial D'}(x_{\partial E'}x_{\partial D'})^{p_j}$ when $\tilde{D'}=E'$) by Remark \ref{changeunderbs}. 
Note that $w_{\{\partial D',\partial E'\}}(\partial D)$ does not have both of ``$x^{2}_{\partial D'}$ or $x^{-2}_{\partial D'}$'' and ``$x^{2}_{\partial E'}$ or $x^{-2}_{\partial E'}$''. 
This implies that $s_{\{\partial D',\partial E'\}}(\partial D)$ does not have both of ``$x^{2}_{\partial D'}$ or $x^{-2}_{\partial D'}$'' and ``$x^{2}_{\partial E'}$ or $x^{-2}_{\partial E'}$'' since otherwise, $s_{\{\partial D',\partial E'\}}(\partial D)=w_{\{\partial D',\partial E'\}}(\partial D)$ by Observation \ref{ck2} and $w_{\{\partial D',\partial E'\}}(\partial D)$ would have both. 
In the diagram of $\partial D' \cup \partial E'$ in $\mathcal{P}_{\{\partial D_{L},\partial E\}}$, if $D$ intersects with $D_{L}\cup E$, then $s_{\{\partial D',\partial E'\}}(\partial D)$ has $x^{2}_{\partial E'}$ or $x^{-2}_{\partial E'}$, see the left of Figure \ref{lr}. 
In the diagram of $\partial D' \cup \partial E'$ in $\mathcal{P}_{\{\partial D_{R},\partial E\}}$, if $D$ intersects with $D_{R}\cup E$, then $s_{\{\partial D',\partial E'\}}(\partial D)$ has $x^{2}_{\partial D'}$ or $x^{-2}_{\partial D'}$, see the right of Figure \ref{lr}. 
Hence $D$ is disjoint from $D_{L}\cup E$ or $D_{R}\cup E$. 
Such $D$ is $D_{L}$, $D_{R}$, $E$, a result of a band sum of $D_{L}$ and $E$, or a result of a band sum of $D_{R}$ and $E$. 
However, in any cases, $s_{\{\partial D',\partial E'\}}(\partial D)$ would have both of ``$x^{2}_{\partial D'}$ or $x^{-2}_{\partial D'}$'' and ``$x^{2}_{\partial E'}$ or $x^{-2}_{\partial E'}$'', see Figure \ref{lr}. 
We get a contradiction.

\begin{figure}[htbp]
 \begin{center}
  \includegraphics[width=120mm]{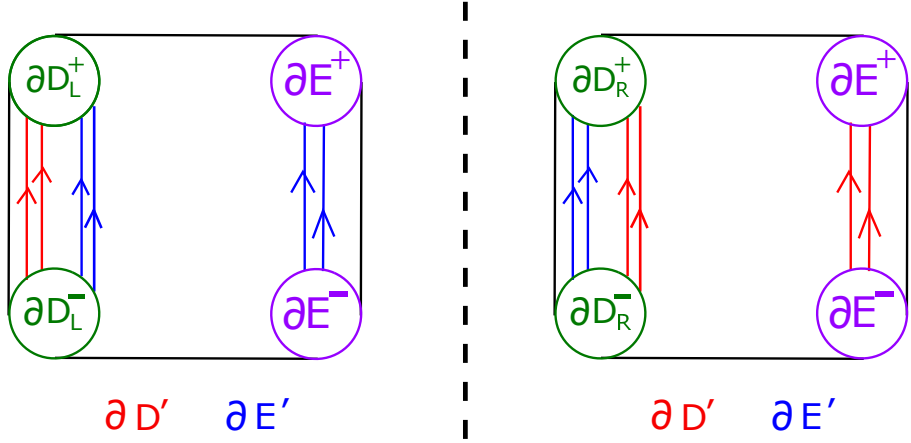}
 \end{center}
 \caption{A part of the diagrams of $\partial D'\cup \partial E'$ on $\mathcal{P}_{\{\partial D_{L},\partial E\}}$ and $\mathcal{P}_{\{\partial D_{R},\partial E\}}$}
 \label{lr}
\end{figure}

(3.2)The case where $\tilde{D'}$ is neither $D'$ nor $E'$.\\
Then $\tilde{E'}$ is $D'$ or $E'$ and $\tilde{D'}$ is obtained by a band sum from $D'$ and $E'$. 
We get a contradiction by an argument similar to that in (3.1)\\

(3.3)The case where $(\tilde{D'},\tilde{E'})$ is $(D',E')$ or $(E',D')$.\\
Note that $(\tilde{D'},\tilde{E'})=(E',D')$ may happen when $p_{i}=p_{j}$. 
Consider the diagram of $\partial D'\cup \partial E'$ on $\mathcal{P}_{\{\partial D_{L},\partial E\}}$ (see Figure \ref{p_dle2}). 
Note that there are arcs, at least two consecutive parallel arcs with the same orientation coming from $\partial D'$ connecting $\partial D^{+}_{L}$ and $\partial D^{-}_{L}$, at least two consecutive parallel arcs with the same orientation coming from $\partial E'$ connecting $\partial D^{+}_{L}$ and $\partial D^{-}_{L}$, and $p_{j}-|q'_{j}|$ consecutive parallel arcs with the same orientation coming from $\partial E'$ connecting $\partial E^{+}$ and $\partial E^{-}$. 
We call these arcs {\it the vertical arcs}. \\

(3.3.1)The case where $D$ intersects with $D_{L}\cup E$.\\
Take an outermost disk of $D$ cut by $D\cap (D_{L}\cup E)$. 
This corresponds to an arc on $\mathcal{P}_{\{\partial D_{L},\partial E\}}$ both of whose endpoints are on one boundary component. 
By Observation \ref{cc-1}, there exist a subarc of $\partial D$ both of whose endpoints are on one boundary component, which is ``the reverse sign'' of the boundary mentioned before. 
The corresponding arcs of these subarcs are disjoint from the vertical arcs since $\partial D$ intersect with $\partial E'$ ,(which is one of $\{\partial \tilde{D'},\partial \tilde{E'}\}$) in the same sign. 
If the outermost disk is cut by $E$, there are a pair of intersection points of $\partial D\cap \partial D'$ and a pair of intersection points of $\partial D \cap \partial E'$ such that each point of these are separated by each other. This contradict the condition of $D$ and $\{\tilde{D'},\tilde{E'}\}$ under $(\tilde{D'},\tilde{E'})=(D',E')$ or $(E',D')$. 
See the left of Figure \ref{p_dle4}. 
If the outermost disk is cut by $D_{L}$, $|\partial D\cap \partial E'|\geq 2(p_{j}-|q'_{j}|)>p_{j}$.
See the right of the Figure \ref{p_dle4}. 
If $E'=\tilde{E'}$, this leads a contradiction, and if $E'=\tilde{D'}$, this also leads a contradiction since $p_{i}=p_{j}$ in this case. 

\begin{figure}[htbp]
 \begin{center}
  \includegraphics[width=120mm]{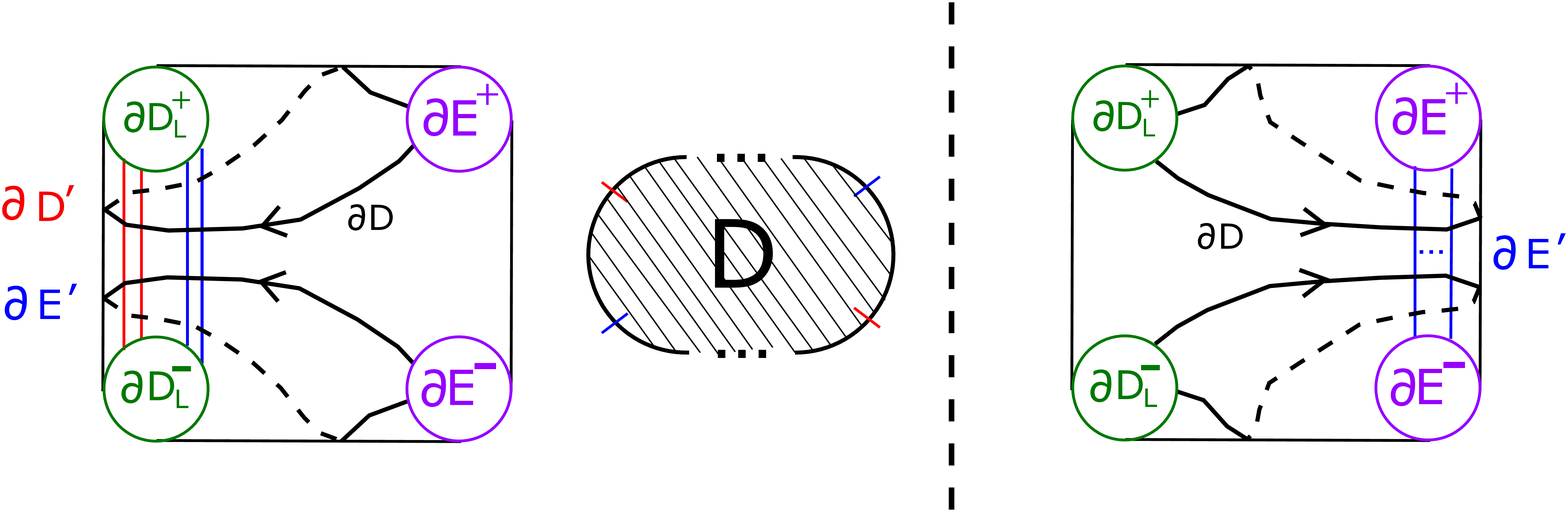}
 \end{center}
 \caption{A part of the diagrams of $\partial D'\cup \partial E'$ on $\mathcal{P}_{\{\partial D_{L},\partial E\}}$ and $\partial D$}
 \label{p_dle4}
\end{figure}

(3.3.2)The case where $D$ is disjoint from $D_{L}\cup E$.\\
There are three possibilities for $D$: ``$D$ is a result of a band sum of $D_{L}$ and $E$'', ``$D=D_{L}$'', and ``$D=E$''. 
Suppose that $D$ is a result of a band sum of $D_{L}$ and $E$. 
Then the band is disjoint from the vertical arcs since $\partial D$ intersect with $\partial E'$ ,(which is one of $\{\tilde{D'},\tilde{E'}\}$) in the same sign. 
Note that in the diagram of $\partial D'\cup \partial E'$ on $\mathcal{P}_{\{\partial D_{L},\partial E\}}$, there are parallel $|q'_{j}|(p_{k}-1)$ arcs (not consecutive) coming from $\partial E'$ connecting $\partial D^{+}_{L}$ and $\partial D^{-}_{L}$, and parallel $p_{j}-|q'_{j}|$ arcs coming from $\partial E'$ connecting $\partial E^{+}$ and $\partial E^{-}$. 
Thus $|\partial D \cap \partial E'|\geq |q'_{j}|(p_{k}-1)+p_{j}-|q'_{j}|>p_{j}$. 
This leads a contradiction. 
Suppose that $D=D_{L}$. 
In the useful left standard diagram of any cases, we can find ${\rm min}\{p_{k}, |q_{k}|\}$ parallel oriented arcs connecting one of the boundary components coming from $\partial D'$ and one of the boundary components coming from $\partial E'$ in the diagram of $\partial D_{L}$ in $\mathcal{P}_{\{\partial D',\partial E'\}}$. 
This means that $s_{\{\partial D',\partial E'\}}(\partial D_{L})$ or $s_{\{\partial D',\partial E'\}}(\partial D_{L})$ has ${\rm min}\{p_{k}, |q_{k}|\}$-times $x_{\partial D'}x_{\partial E'}$ (, $x_{\partial D'}x^{-1}_{\partial E'}$, $x^{-1}_{\partial D'}x_{\partial E'}$, or $x^{-1}_{\partial D'}x^{-1}_{\partial E'}$). 
This contradicts to the condition of $D$ and $\{ \tilde{D'},\tilde{E'}\}$ under $D=D_{L}$ and $\{ \tilde{D'},\tilde{E'}\}=\{D',E'\}$.\\

By the above, the only possible case is $D=E$ and $(\tilde{D'},\tilde{E'})=(D',E')$ or $(E',D')$. This completes the proof.

\subsection{A proof of Claim \ref{claim2}}
Suppose that there exists an element $f$ of $\mathcal{G}(H_{\{i,j\}})$ such that exchange $D'$ and $E'$ (of the useful standard diagram). 
By Claim \ref{claim1}, all elements of $\mathcal{G}(H_{\{i,j\}})$ fix $E$ as a set. 
By comparing the intersection numbers of $\partial E\cap \partial D'$ and $\partial E \cap \partial E'$, $p_{i}=p_{j}$ must holds. 
By composing $h$ in Subsection \ref{invs} if necessary, we assume that $f$ preserves $E$ with the orientation. 
We consider the action of $f$ on the set of the intersection points of $\partial E$ and $\partial D'\cup \partial E'$, see Figure \ref{int_we}. 
In this figure, $(n)$ denotes a number $n$ modulo $p_i$, and $[m]$ denotes a number $m$ modulo $p_j$. 
Since $f$ preserve the orientation of $E$, the intersections $(0)$ and $(q'_{i})$ are mapped to $[p_{j}-1]$ and $[p_{j}-q'_{i}-1]$, respectively. 
By following $\partial D'$ from $(0)$ toward the upper side of $E$ (in Figure \ref{int_we}), we reach at $(q'_{i})$. 
By following $\partial E'$ from $[p_{j}-1]$ toward the upper side of $E$ (in Figure \ref{int_we}), we reach at $[p_{j}-q'_{j}-1]$. 
This implies that $q'_{i}\equiv q'_{j}$ mod $p_{i}(=p_{j})$. 
Recall that $2|q'_{i}|<p_{i}$ and $2|q'_{j}|<p_j$. 
Thus $q'_{i}=q'_{j}$. 
Recall also that $q'_{i}=q'_{j}$ if and only if $q_{i}\equiv q_{j}$ mod $p_{i}(=p_{j})$. \\
When $p_{i}=p_{j}$ and $q_{i}\equiv q_{j}$ mod $p_{i}$, there is actually an element, $\iota$ in Subsection \ref{invs} for example, of $\mathcal{G}(H_{\{i,j\}})$ exchanging $D'$ and $E'$.

\begin{figure}[htbp]
 \begin{center}
  \includegraphics[width=100mm]{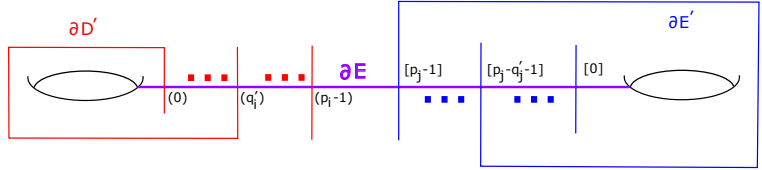}
 \end{center}
 \caption{Intersections of $\partial E$ and $\partial D'\cup \partial E'$}
 \label{int_we}
\end{figure}

\subsection{A proof of Claim \ref{claim3}}
Suppose that $g$ is an element of $\mathcal{G}(H_{\{i,j\}})$ mapping $E$ to $E$ preserving the orientation and mapping $D'$ and $E'$ to themselves. 
Since $g$ preserves the orientation of $E$, $g$ fixes the each intersection point in Figure \ref{int_we}. 
This implies that $g$ fixes the orientations of $D'$ and $E'$. 
Consider the domains of $S\setminus (\partial D'\cup \partial E'\cup \partial E)$, see Figure \ref{domains}. 
In the right of Figure \ref{domains}, all domains are put so that the front side of $S$ is front side of this paper. 
Each of two octagons is mapped to itself by $g$ since $g$ can be regarded as an orientation preserving self homeomorphism of $S$. 
By this, we see that each domain is mapped to itself by $g$. 
Moreover, on the boundary of each domain, $g$ is isotopic to the identity. 
By the Alexander's trick, we see that $g$ is isotopic to the identity as a self homeomorphism of $S$.

\begin{figure}[htbp]
 \begin{center}
  \includegraphics[width=80mm]{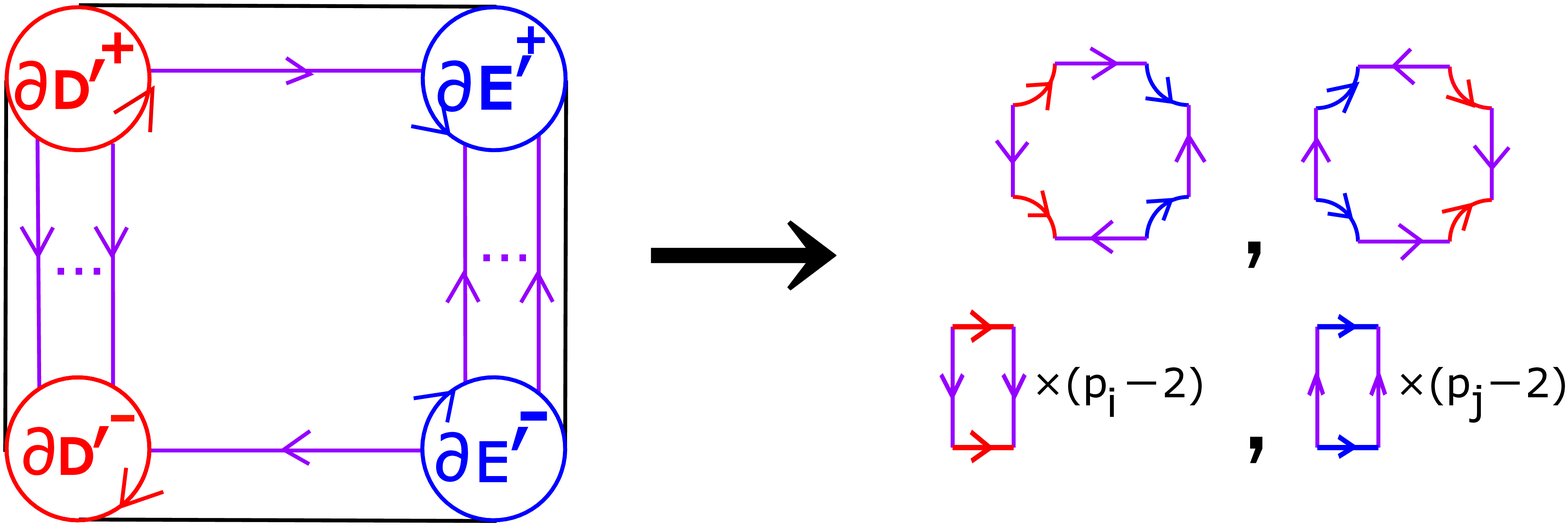}
 \end{center}
 \caption{Domains of $S\setminus (\partial D'\cup \partial E'\cup \partial E)$}
 \label{domains}
\end{figure}

\vspace{0.5cm}
\ GRADUATE SCHOOL OF MATHEMATICAL SCIENCES, THE UNIVERSITY OF TOKYO, 3-8-1 KOMABA, MEGURO--KU, TOKYO, 153-8914, JAPAN\\
\ \ E-mail address: \texttt{sekinonozomu@g.ecc.u-tokyo.ac.jp}


\begin{thebibliography}{25}

\bibitem[1]{akbas} E. Akbas. A presentation for the automorphisms of the 3-sphere that preserve a genus two Heegaard splitting. Pacific J. Math. 236 (2008), no. 2, 201--222.

\bibitem[2]{cho1} S. Cho. Homeomorphisms of the 3-sphere that preserve a Heegaard splitting of genus two. Proc. Amer. Math. Soc. 136 (2008), no. 3, 1113--1123.

\bibitem[3]{cho2} S. Cho. Genus two Goeritz groups of lens spaces. Pacific J.Math. 265 (2013), no. 1, 1--16.

\bibitem[4]{ck1} S. Cho, Y. Koda. The genus two Goeritz group of $S^2\times S^1$. Math. Res. Lett. 21 (2014), no. 3, 449--460.

\bibitem[5]{ck2} S. Cho, Y. Koda. Disk complexes and genus two Heegaard splittings for non-prime 3-manifolds. Int. Math. Res. Not. IMRN 2015 (2015), 4344--4371.

\bibitem[6]{ck3} S. Cho, Y. Koda. Connected primitive disk complexes and genus two Goeritz groups of lens spaces. Int. Math. Res. Not. IMRN 2016 (2016), 7302--7340.

\bibitem[7]{ck4} S. Cho, Y. Koda. The mapping class groups of reducible Heegaard splittings of genus two. Transactions of American Mathematical Society 371 (2019), no. 4, 2473--2502.

\bibitem[8]{fs} M. Freedman, M. Scharlemann. Powell moves and the Goeritz group. arXiv:1804.05909.

\bibitem[9]{goeritz} L. Goeritz. Die Abbildungen der Berzelfl\''{a}che und der Volbrezel vom Gesschlect 2. Abh. Math. Sem. Univ. Hamburg 9 (1933), 244--259.

\bibitem[10]{hempel} J. Hempel. 3-manifolds as viewed from the curve complex. Topology 40 (2001), no. 3, 631--657.

\bibitem[11]{iguchi} D. Iguchi. Thick isotopy property and the mapping class groups of Heegaard splittings. arXiv:2008.11548.

\bibitem[12]{ik1} D. Iguchi, Y. Koda. Twisted book decompositions and the Goeritz groups. Topology Appl. 272 (2020), 107064, 15 pp.

\bibitem[13]{jaco} W. Jaco. Lectures on three-manifold topology. CBMS Regional Conference Series in Mathematics, vol. 43, American Mathematical Society, Providence, RI, 1980.

\bibitem[14]{johnson1} J. Johnson. Mapping class groups of medium distance Heegaard splittings. Proc. Amer. Math. Soc. 138 (2010), no. 12, 4529--4535.

\bibitem[15]{johnson2} J. Johnson. Heegaard splittings and open books. arXiv:1110.2142.

\bibitem[16]{ms} Y. Moriah, J. Schultens. Irreducible Heegaard splittings of Seifert fibered spaces are either vertical or horizontal. Topology 37 (1998), no. 5, 1089--1112.

\bibitem[17]{scharlemann} M. Scharlemann. Automorphisms of the 3-sphere that preserve a genus two Heegaard splitting. Bol. Soc. Mat. Mexicana (3) 10 (2004), Special Issue, 503--514.

\end{thebibliography}
\end{document}